\newtheorem{proposition}{Proposition}
\newtheorem{lemma}{Lemma}
\newtheorem{theorem}{Theorem}
\theoremstyle{remark}
\newcommand{\CC}{\mathbb C}
\newcommand{\QQ}{\mathbb Q}
\newcommand{\Fp}{{\mathbb F}_p}
\newcommand{\Fq}{{\mathbb F}_q}
\newcommand{\Fqr}{{\mathbb F}_{q^r}}
\newcommand{\Fpr}{{\mathbb F}_{p^r}}
\newcommand{\Fqm}{{\mathbb F}_{q^m}}
\newcommand{\Ql}{\bar{\mathbb Q}_\ell}
\newcommand{\Dbc}{D^b_c}
\newcommand{\FF}{\mathcal F}
\newcommand{\GGG}{\mathcal G}
\newcommand{\HH}{\mathcal H}
\newcommand{\Gm}{{\mathbb G}_m}
\newcommand{\PP}{\mathbb P}
\newcommand{\HHH}{\mathrm H}
\title{An effective criterion for finite monodromy of $\ell$-adic sheaves}
\author{Antonio Rojas Le\'on \\
        Departamento de \'Algebra \\
        Universidad de Sevilla \\
        c/Tarfia, s/n, 41012 Sevilla, SPAIN \\
        ORCID 0000-0003-1683-9487 \\
        \tt{arojas@us.es}}
\begin{document}

\maketitle

% \emph{Dedicated to Professor Pham Huu Tiep on the occasion of his 60th birthday}

\abstract{We provide an effective version of Katz' criterion for finiteness of the monodromy group of a lisse, pure of weight zero, $\ell$-adic sheaf on a normal variety over a finite field, depending on the numerical complexity of the sheaf.}

\section{Introduction}
Let $X$ be a normal, geometrically irreducible variety over a field $k=\Fq$ of characteristic $p$. Fix a prime $\ell\neq p$, and consider the category $\mathcal S(X,\Ql)$ of lisse $\Ql$ sheaves on $X$. A sheaf will be said to be pure (of a certain weight) if it is so for every embedding $\Ql\to\CC$.

Fix a geometric generic point $\bar\eta$ of $X$. Every sheaf $\FF\in\mathcal S(X,\Ql)$ of rank $r$ corresponds to a continuous representation $\pi_1(X,\bar\eta)\to \mathrm{GL}(r,\Ql)$. The Zariski closure of its image (respectively of the image of the subgroup $\pi_1(\overline X,\bar\eta)$, where $\overline X=X\otimes_{\Fq}\overline{\Fq}$) is called the \emph{arithmetic monodromy group} (resp. the \emph{geometric monodromy group}) of $\FF$. It is well known that, under certain conditions, these groups govern the distribution of the Frobenius traces of the sheaf $\FF$ on the set of rational points of $X$ over larger and larger extensions of $\Fq$.

Several methods have been used in the literature to determine these groups, such as using local monodromies to deduce enough properties of it so that there is only one possibility \cite[Chapter 11]{katz1988gauss}, or computing their moments and applying Larsen's alternative \cite[Chapter 2]{katz2005mma}. In some cases, one can determine that there are only two options: either the group is finite of it is one specific group \cite{such2000monodromy}. In any case, the question of determining whether the monodromy groups are finite is an interesting one.

In general, we have the following criterion, proven in \cite[8.14]{katz1990esa} for curves and in \cite[Proposition 2.1]{krlt-co3} for higher dimensional varieties:
\begin{proposition}\label{old}
 Suppose that $X$ is smooth and $\FF$ is geometrically irreducible and pure of weight $0$. Then the following conditions are equivalent:
 \begin{enumerate}
  \item The geometric monodromy group $G_{geom}$ of $\FF$ is finite.
  \item The arithmetic monodromy group $G_{arith}$ of $\FF$ is finite.
  \item For every finite extension $\Fq\subseteq\Fqr$ and every $t\in X(\Fqr)$, the Frobenius trace of $\FF$ at $t$ is an algebraic integer.
  \item For every finite extension $\Fq\subseteq\Fqr$ and every $t\in X(\Fqr)$, the Frobenius eigenvalues of $\FF$ at $t$ are roots of unity.
 \end{enumerate}
\end{proposition}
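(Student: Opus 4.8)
The plan is to establish the cycle $(1)\Rightarrow(2)\Rightarrow(4)\Rightarrow(1)$ together with the equivalence $(3)\Leftrightarrow(4)$, dispatching the soft implications first and isolating two genuinely analytic/arithmetic steps. The three easy ones: since $\pi_1(\overline X,\bar\eta)$ is a normal subgroup of $\pi_1(X,\bar\eta)$ we have $G_{geom}\subseteq G_{arith}$, so $(2)\Rightarrow(1)$ is immediate. If $G_{arith}$ is finite then every element, in particular every Frobenius, has finite order, so its eigenvalues are roots of unity, giving $(2)\Rightarrow(4)$; and since roots of unity are algebraic integers and sums of algebraic integers are algebraic integers, $(4)\Rightarrow(3)$ follows. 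It remains to treat $(1)\Rightarrow(2)$, $(3)\Rightarrow(4)$, and $(4)\Rightarrow(1)$.

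For $(1)\Rightarrow(2)$, recall that $G_{geom}$ is normal in $G_{arith}$ with pro-cyclic quotient $G_{arith}/G_{geom}$ topologically generated by the image of a Frobenius $\phi$. Geometric irreducibility means $G_{geom}$ acts irreducibly on $V=\Ql^r$, so by Schur's lemma its centralizer in $\GL(V)$ consists of scalars. As $G_{geom}$ is finite, $\mathrm{Aut}(G_{geom})$ is finite, so some power $\phi^N$ centralizes $G_{geom}$ and therefore acts as a scalar $\mu\in\Ql^\star$. Hence $G_{arith}=\langle G_{geom},\phi\rangle$ is finite as soon as $\mu$ is a root of unity. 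This is precisely where purity is used: $\mu$ is pure of weight $0$, so $|\iota(\mu)|=1$ for every embedding $\iota\colon\Ql\hookrightarrow\CC$, and the weight-$0$ hypothesis (concretely, finiteness of the order of $\det\FF$, which makes $\mu^r$ a root of unity) then forces $\mu$ itself to be a root of unity by Kronecker's theorem.

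The implication $(3)\Rightarrow(4)$ is local and elementary. Fix $t\in X(\Fqr)$ and let $\alpha_1,\dots,\alpha_r$ be the eigenvalues of $\mathrm{Frob}_t$ on $\FF$; viewing $t$ as a point of $X(\Fq^{rm})$ the relevant Frobenius is $\mathrm{Frob}_t^{\,m}$, so (3) says that each power sum $p_m=\sum_i\alpha_i^{m}$ is an algebraic integer. By purity $|\iota(\alpha_i)|=1$ for every $\iota$, hence $|\iota(p_m)|\le r$, and every $p_m$ lies in the \emph{fixed} number field $K=\QQ(\alpha_1,\dots,\alpha_r)$. Thus the $p_m$ are algebraic integers of $\mathcal O_K$ with all archimedean absolute values bounded by $r$, of which there are only finitely many; so $(p_m)_{m\ge1}$ takes finitely many values. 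Since this sequence satisfies the linear recurrence with characteristic polynomial $\prod_i(T-\alpha_i)$, a pigeonhole argument on windows of length $r$ shows it is eventually periodic, and the linear independence of the distinct geometric progressions $m\mapsto\alpha_i^{m}$ then forces $\alpha_i^{D}=1$ for a common $D\ge1$. Hence all the $\alpha_i$ are roots of unity, which is (4).

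The implication $(4)\Rightarrow(1)$ is the heart of the matter and the step I expect to be the main obstacle. Granting (4) for $\FF$, one gets it for every tensor construction $\FF^{\otimes a}\otimes(\FF^\vee)^{\otimes b}$. The tool is Deligne's equidistribution theorem, a consequence of Weil~II: as $r\to\infty$ the Frobenius conjugacy classes $\{\theta_t : t\in X(\Fqr)\}$ equidistribute in the space of conjugacy classes of a maximal compact subgroup $K$ of $G_{geom}(\CC)$, where one also uses that $G_{geom}$ is reductive. If $G_{geom}$ were infinite, $K$ would be a compact Lie group of positive dimension and the character $\chi_V$ of the faithful representation $V$ would be a non-constant real-analytic class function, hence take infinitely many values on the dense set of Frobenius classes. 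The delicate point — and the reason this direction is hard — is to contradict this by showing that (4) forces $t\mapsto\Tr(\theta_t)$ to take only finitely many values: the pointwise root-of-unity condition does not by itself bound the orders of the eigenvalues, so one must feed in the global structure of $\FF$ (integrality of the cohomological Frobenius eigenvalues and the fact that the traces lie in a single number field) to conclude that these orders are bounded. Once the trace function is finite-valued, $K$ and hence $G_{geom}$ must be finite, and assembling $(1)\Rightarrow(2)\Rightarrow(4)\Rightarrow(1)$ with $(3)\Leftrightarrow(4)$ closes all four equivalences.
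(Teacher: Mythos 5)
First, a point of reference: the paper does not prove Proposition \ref{old} at all --- it quotes it from \cite[8.14]{katz1990esa} and \cite[Proposition 2.1]{krlt-co3} --- so your proposal has to stand on its own. Your soft implications $(2)\Rightarrow(1)$, $(2)\Rightarrow(4)$, $(4)\Rightarrow(3)$ are fine, and your $(3)\Rightarrow(4)$ argument is correct and complete: the power sums $p_m$ are algebraic integers in the \emph{fixed} number field $\QQ(\alpha_1,\dots,\alpha_r)$ with all archimedean absolute values $\leq r$, hence take finitely many values, and the recurrence-plus-pigeonhole argument together with linear independence of distinct geometric progressions does force $\alpha_i^D=1$. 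The two implications you yourself single out as substantive, however, both have genuine gaps.

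For $(4)\Rightarrow(1)$ (equivalently $(4)\Rightarrow(2)$), you reduce everything to showing that the trace function takes finitely many values, and then say one ``must feed in the global structure'' to bound the orders of the eigenvalues --- but that bound \emph{is} the content of the implication, and nothing in your sketch produces it. The missing idea is $\ell$-adic, not archimedean: the representation $\rho:\pi_1(X,\bar\eta)\to\GL(r,\Ql)$ is continuous and $\pi_1$ is compact, so its image is conjugate into $\GL(r,\mathcal O_\lambda)$ for the ring of integers $\mathcal O_\lambda$ of a finite extension $E_\lambda$ of $\QQ_\ell$. Every Frobenius eigenvalue then lies in an extension of $E_\lambda$ of degree $\leq r$, and the set of roots of unity lying in extensions of $E_\lambda$ of bounded degree is finite; so condition (4) forces all eigenvalues to have order dividing a single integer $M$. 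Hence the traces lie in the finite set of sums of $r$ roots of unity of order dividing $M$, and one concludes either by Burnside's theorem (an irreducible subgroup of $\GL(r,\CC)$ with finitely many distinct traces is finite) or by Chebotarev density of Frobenii plus the fact that a positive-dimensional reductive group contains a torus, hence semisimple elements of unbounded order. Your equidistribution route invokes much heavier machinery (Weil II, reductivity, a maximal compact) and still founders on exactly this point, so as written the central implication is not proved.

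The implication $(1)\Rightarrow(2)$ is broken in a different way: your parenthetical claim that purity of weight $0$ yields ``finiteness of the order of $\det\FF$'' is false, and Kronecker's theorem is inapplicable because $\mu$ need not be an algebraic integer. Concretely, the rank-one geometrically constant sheaf $\alpha^{\deg}$ with $\alpha=(3+4i)/5$ is geometrically irreducible and pure of weight $0$ for every embedding (both conjugates have absolute value $1$), has trivial $G_{geom}$, yet has infinite $G_{arith}$, since $\alpha$ is not an algebraic integer and hence not a root of unity. So $(1)\Rightarrow(2)$ genuinely fails under the hypotheses as stated; the cited sources, and the paper's own applications in Propositions \ref{prop2} and \ref{propcurves}, add the hypothesis that $\det\FF$ is arithmetically of finite order. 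With that hypothesis your Schur-lemma argument ($\phi^N$ central, hence a scalar $\mu$, with $\mu^r=\det(\phi^N)$ a root of unity) is exactly the right one; without it, no proof of this direction can exist, and the correct repair is to assume finite-order determinant or to claim only that (2), (3), (4) are equivalent and imply (1).
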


 However, this criterion does not provide an effective algorithm, since one needs to check that condition (3) or (4) holds for \emph{every} finite extension of $\Fq$. In practice, one usually needs to resort to ad-hoc methods to check that these conditions hold for a particular $\FF$ (see e.g. \cite[Theorem 3.1]{krlt-co3}). In this article, we prove the following effective version of the criterion, relying on the complexity of $\FF$ as defined by Sawin \cite[Definition 6.4]{quantitative}.

\begin{theorem}
 Suppose that $X$ is smooth and given with a projective embedding $u:X\hookrightarrow{\mathbb P}^n$. Then there exist explicit constants $N_1=N_1(u,d,r,C)$ and $N_2=N_2(u,d,r,C)$ such that for every geometrically irreducible $\FF\in{\mathcal S}(X,\Ql)$ of rank $r$, pure of weight $0$, with complexity $c_u(\FF)\leq C$ and $E$-defined, where $\mathbb Q\subseteq E$ is a finite extension of degree $\leq d$, the following conditions are equivalent:
 \begin{enumerate}
  \item The geometric monodromy group $G_{geom}$ of $\FF$ is finite.
  \item The arithmetic monodromy group $G_{arith}$ of $\FF$ is finite.
  \item For every finite extension $\Fq\subseteq\Fqr$ of degree $\leq N_1$ and every $t\in X(\Fqr)$, the Frobenius trace of $\FF$ at $t$ is an algebraic integer.
  \item For every finite extension $\Fq\subseteq\Fqr$ of degree $\leq N_2$ and every $t\in X(\Fqr)$, the Frobenius eigenvalues of $\FF$ at $t$ are roots of unity.
 \end{enumerate}
\end{theorem}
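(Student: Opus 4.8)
The plan is to deduce everything from an effective ``integrality detection'' statement. As the hypotheses here are strictly stronger than those of Proposition~\ref{old}, the equivalence (1)$\Leftrightarrow$(2) and the implications (2)$\Rightarrow$(3), (2)$\Rightarrow$(4) are free: finiteness of $G_{arith}$ gives, via Proposition~\ref{old}, integral traces and root-of-unity eigenvalues at \emph{every} point, hence \emph{a fortiori} over extensions of degree $\le N_1$ (resp. $\le N_2$). Everything therefore rests on the two converses, which I would establish contrapositively in the sharp form: \emph{if $G_{geom}$ is infinite, then some point over an explicitly bounded extension already has a non-integral Frobenius trace, and some such point has a Frobenius eigenvalue that is not a root of unity.} Write $V$ for the standard representation of the monodromy (the fibre of $\FF$) and $K$ for a maximal compact subgroup of the complex points of $G_{geom}$.

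For the trace, I would first encode integrality combinatorially. As $\FF$ is $E$-defined with $[E:\QQ]\le d$, each $\Tr(F_t)\in E$, and weight-$0$ purity gives $|\iota(\Tr(F_t))|\le r$ for every embedding $\iota\colon E\hookrightarrow\CC$. Thus, \emph{if} $\Tr(F_t)$ is an algebraic integer it lies in the finite set
\[
 S=\{x\in\mathcal O_E:\ |\iota(x)|\le r\ \text{for all }\iota\},\qquad |S|\le B(d,r),
\]
whose cardinality is explicit by discreteness of $\mathcal O_E$ in $E\otimes\RR$. Suppose now $G_{geom}$ is infinite, so $K$ is positive dimensional. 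Since $V$ is faithful and irreducible, the character $\Tr$ is non-constant on the connected group $K^0$ and so assumes infinitely many values; in particular it cannot be swallowed by $S$. Hence $f(g):=\prod_{s\in S}(\Tr(g)-s)$ is not identically zero on $K$, and
\[
 c_f:=\int_K |f(g)|^2\,dg\ >\ 0 .
\]

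The effective step compares $c_f$ with a trace sum. Writing $f(\Tr(g))=\sum_{k\le|S|}c_k\Tr(g)^k$ and using $\Tr(g)^k=\Tr(g\mid V^{\otimes k})$ realizes $|f(F_t)|^2$ as $\Tr(F_t\mid\mathcal W)$ for a pure weight-$0$ virtual sheaf $\mathcal W$ assembled from the $\FF^{\otimes k}$ with $k\le 2|S|$; Sawin's complexity calculus \cite{quantitative} bounds $c_u(\mathcal W)$, and hence $\sum_i\dim\HHH^i_c(\overline X,\mathcal W)$, explicitly in terms of $u,r,C,|S|$. Grothendieck--Lefschetz and Deligne's weight bound then give
\[
 \frac{1}{|X(\Fqr)|}\sum_{t\in X(\Fqr)}|f(F_t)|^2\ =\ c_f+O\!\left(\textstyle\sum_i\dim\HHH^i_c(\overline X,\mathcal W)\cdot q^{-r/2}\right),
\]
the main term $c_f$ being the multiplicity of the trivial representation in $\mathcal W$. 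Taking $N_1$ so large that the explicit error is $<c_f$ forces the left side to be positive, producing a point with $\Tr(F_t)\notin S$ and violating (3). \textbf{The crux is to make $c_f$ effectively positive}, i.e. to bound it below by some $c_{\min}(d,r)>0$ uniformly over all admissible infinite $K$. Here I would exploit that, $V$ being irreducible, the central torus of $K^0$ acts by scalars (Schur), so that the pair $(K^{0,\mathrm{der}},V)$ ranges over only the \emph{finitely many} semisimple groups carrying an irreducible representation of dimension $r$. Factoring $g=\chi(z)\,k'$ with $\chi$ the (possibly trivial) central character and $k'$ in the semisimple part $K'$, Parseval on the central circle gives, whenever $\chi\neq1$,
\[
 \int_K|f|^2\,dg\ \ge\ \int_{K'}|\Tr(k')|^{2|S|}\,dk'\ >\ 0,
\]
where $\Tr(k')$ denotes the trace of $k'$ on $V$; when $\chi=1$ the group $K$ is itself one of the finitely many semisimple types and $c_f>0$ directly. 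The minimum of these finitely many positive numbers is the desired $c_{\min}$.

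For the eigenvalue statement I would pass to exterior powers. By Kronecker, an algebraic number all of whose archimedean conjugates have absolute value $1$ is a root of unity iff it is an algebraic integer; weight-$0$ purity supplies the hypothesis for each Frobenius eigenvalue, so (4) at $t$ is equivalent to integrality of every coefficient of $\det(1-TF_t\mid\FF)$, i.e. of every $\Tr(F_t\mid\wedge^j\FF)$ for $1\le j\le r$. If $G_{geom}$ is infinite the characteristic-polynomial map is non-constant on $K$ (its value at the identity is $(T-1)^r$, forcing triviality of $K$ otherwise), so some $e_j=\Tr(\,\cdot\mid\wedge^jV)$ is non-constant; running the integrality-detection argument of the previous paragraphs on each sheaf $\wedge^j\FF$---whose complexity is again explicitly bounded in $r,C$ by \cite{quantitative}, and for which the central torus now acts on $\wedge^jV$ through the scalar $\chi^{j}$, so that the same Parseval bound applies---produces an explicit level witnessing the failure of (4). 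Setting $N_2=\max_{1\le j\le r}N_1(\wedge^j\FF)$ then completes the proof.
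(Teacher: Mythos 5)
Your proposal attacks the statement by a genuinely different route than the paper (contrapositive via an effective equidistribution/moment argument, rather than the paper's positive route: Theorem \ref{traces} applied to the Adams power $\HH^{[M]}$, plus a Newton-polygon lemma), but as written it has two genuine gaps. The first is the identification of the main term of your average with $c_f=\int_K|f|^2\,dg$, $K$ a maximal compact subgroup of $G_{geom}$. By Grothendieck--Lefschetz, the main term over $\mathbb{F}_{q^m}$ is $q^{md}$ times the trace of Frobenius on the $\pi_1(\overline{X})$-coinvariants of the relevant tensor sheaves, and Frobenius acts on those coinvariant spaces through weight-zero eigenvalues that need not equal $1$ when $G_{arith}\supsetneq G_{geom}$; Deligne's equidistribution theorem, which is what your displayed formula amounts to, requires $G_{geom}=G_{arith}$ (or a coset formulation), and this is \emph{not} implied by the hypotheses: one can twist any example by $\alpha^{\deg}$ with $\alpha=(3+4i)/5$, which preserves geometric irreducibility, purity of weight $0$ and $E$-definedness (for a degree-$\le 2d$ field) while making $G_{arith}$ strictly larger than $G_{geom}$. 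For such a sheaf your main term at level $m$ is a sum $\sum_i m_i\gamma_i^m$ with $|\gamma_i|=1$, which can be small or zero at any prescribed $m$; forcing it to be large at some explicitly bounded $m$ needs an extra ingredient (a twisting reduction to finite-order arithmetic determinant, or Tur\'an-type power-sum lower bounds) that is absent from your argument. This is precisely the difficulty the paper's Grothendieck-group manipulation avoids: there, the bounded-level hypothesis is converted into an identity of trace functions valid at \emph{all} levels (via Theorem \ref{traces}), and one concludes by Proposition \ref{old} with no equidistribution at all.

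The second gap is the uniform effective lower bound $c_f\ge c_{\min}(d,r)>0$, which is the step you yourself flag as the crux. Your classification argument is incorrect: $V$ is irreducible under $K$ but not necessarily under $K^0$, so Schur's lemma does not make the central torus of $K^0$ act by scalars, the factorization $g=\chi(z)k'$ with $k'\in K^{0,\mathrm{der}}$ covers at most the identity component, and the dichotomy ``either $\chi\neq 1$, or $K$ is one of finitely many semisimple groups'' fails. A concrete counterexample is $K=N(T)$, the normalizer of a maximal torus in $\mathrm{SU}(2)$: it is infinite, acts irreducibly on $\CC^2$, has trivial $K^{0,\mathrm{der}}$, and contains no circle of scalars, so neither branch of your argument applies. (A correct uniform bound can be salvaged --- the moments $\int_K\Tr(g)^j\overline{\Tr(g)}^k\,dg$ are non-negative integers at most $r^{j+k}$, so $c_f$ is a value of an explicit quadratic form at one of the finitely many admissible integer Gram matrices, and Northcott's theorem bounds the possible sets $S$ uniformly over all $E$ of degree $\le d$ --- but none of this is in your text; note also that your bound $|S|\le B(d,r)$ requires exactly this Northcott-type count, since discreteness of $\mathcal{O}_E$ alone gives a bound depending on the discriminant of $E$, not only on $d$.) Until both points are repaired, the proposal does not prove the theorem.
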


If $X$ is a smooth curve, we have a more explicit version which does not make use of the general concept of complexity:
\begin{theorem}
 Suppose that $X$ is a smooth curve, $Y$ its smooth projective closure and $D=Y\backslash X$. Then there exist explicit constants $N_1=N_1(X,d,r,e)$ and $N_2=N_2(X,d,r,e)$ such that for every geometrically irreducible $\FF\in{\mathcal S}(X,\Ql)$ of rank $r$, pure of weight $0$, all whose breaks at every $t\in D(\overline Fq)$ are $\leq e$, and $E$-defined, where $\mathbb Q\subseteq E$ is a finite extension of degree $\leq d$, the following conditions are equivalent:
 \begin{enumerate}
  \item The geometric monodromy group $G_{geom}$ of $\FF$ is finite.
  \item The arithmetic monodromy group $G_{arith}$ of $\FF$ is finite.
  \item For every finite extension $\Fq\subseteq\Fqr$ of degree $\leq N_1$ and every $t\in X(\Fqr)$, the Frobenius trace of $\FF$ at $t$ is an algebraic integer.
  \item For every finite extension $\Fq\subseteq\Fqr$ of degree $\leq N_2$ and every $t\in X(\Fqr)$, the Frobenius eigenvalues of $\FF$ at $t$ are roots of unity.
 \end{enumerate}
\end{theorem}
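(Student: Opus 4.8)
The plan is to prove the equivalence by establishing $(1)\Leftrightarrow(2)$ and $(1)\Rightarrow(4)\Rightarrow(3)$ cheaply and concentrating all the work on $(3)\Rightarrow(1)$. The equivalence $(1)\Leftrightarrow(2)$ is the non-effective content of Proposition~\ref{old} and needs no bound: for a geometrically irreducible, weight-$0$ sheaf the geometric and arithmetic monodromy groups share the same identity component, so one is finite iff the other is. If $G_{arith}$ is finite then every Frobenius has finite order, so its eigenvalues are roots of unity (giving $(4)$ for all $r$, hence for $r\le N_2$) and its traces are sums of roots of unity, hence algebraic integers (giving $(3)$); once we impose $N_1\le N_2$, the chain $(1)\Rightarrow(4)\Rightarrow(3)$ is immediate. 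Everything therefore reduces to showing that integrality of the traces up to the explicit bound $N_1$ forces finiteness.

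For $(3)\Rightarrow(1)$ I would first record the arithmetic inputs from purity. Since $\FF$ is $E$-defined with $[E:\QQ]\le d$ and has rank $r$, every Frobenius eigenvalue lies in a number field of degree $\le d\cdot r!$ over $\QQ$, and since $\FF$ is pure of weight $0$ for every embedding, each eigenvalue has absolute value $1$ at every archimedean place. By Kronecker's theorem an algebraic integer all of whose conjugates have absolute value $1$ is a root of unity, and a root of unity of degree $\le d\cdot r!$ has order bounded by an explicit $B=B(d,r)$; set $B'=\mathrm{lcm}(1,\dots,B)$. The bridge to the hypothesis is a power-sum argument: for a closed point $t$ of degree $r'$ with eigenvalues $\alpha_1,\dots,\alpha_r$, the power sum $\sum_i\alpha_i^m$ equals $\Tr(\mathrm{Frob}\mid\FF)$ at the \emph{same} point viewed over $\Fq^{r'm}$. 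If some $\alpha_i$ were non-integral at a finite place $\lambda$, a Newton-polygon computation shows the most negative valuation survives in $\sum_i\alpha_i^m$ for some $m$ bounded by an explicit $M_0=M_0(d,r)$ --- the only delicate case being residue characteristics $\le r$, handled through the periodicity of $\sum\bar u_i^m$ in the residue field --- so that this power sum fails to be an algebraic integer. Contrapositively, if $(3)$ holds up to degree $N_1$ then for every $t$ of degree $r'\le R_0:=\lfloor N_1/M_0\rfloor$ all power sums with $m\le M_0$ are integral, whence every eigenvalue of $\mathrm{Frob}_t$ is a root of unity of order dividing $B'$; in particular $\mathrm{Frob}_t^{B'}=\Id$.

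It remains to pass from ``every Frobenius of bounded degree has order dividing $B'$'' to finiteness of the whole group. Let $H\subseteq G_{arith}$ be the Zariski closure of the Frobenius conjugacy classes at points of degree $\le R_0$. Because $\{g:g^{B'}=\Id\}$ is Zariski-closed, $H$ has exponent dividing $B'$; and a linear algebraic group over $\Ql$ (characteristic $0$) of finite exponent is finite, since a non-trivial identity component would contain a copy of $\Gm$ or $\GG_a$ and hence elements of infinite order. Thus $H$ is finite, and the proof is completed by the effective density statement $H=G_{arith}$, valid as soon as $R_0$ exceeds an explicit threshold.

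The main obstacle is exactly this last point: an \emph{effective equidistribution} result forcing Frobenii of degree $\le R_0$ to Zariski-generate $G_{arith}$, with $R_0$ explicit and uniform over the whole family. The mechanism is standard in outline --- if $H\subsetneq G_{arith}$ then some geometrically irreducible constituent $\GGG$ of a tensor construction $\FF^{\otimes a}\otimes(\FF^\vee)^{\otimes b}$ has an anomalous sum $\sum_{\deg t\le R_0}\Tr(\mathrm{Frob}_t\mid\GGG)$, which Deligne's Riemann Hypothesis bounds by $O(\mathrm{cond}(\GGG)\,q^{R_0/2})$ and thereby forces $R_0$ to be small --- but making it quantitative needs two explicit ingredients. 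First, a bound on the conductors of these constructions: since all breaks of $\FF$ are $\le e$, the same holds for $\GGG$, and Grothendieck--Ogg--Shafarevich bounds $-\chi(\overline X,\GGG)$ explicitly in terms of $g(Y)$, $|D|$, $r$ and $e$. Second, and more seriously, a uniform bound on \emph{which} representations must be tested --- equivalently a Jordan/Sawin-type bound on the proper closed subgroups of a rank-$r$ subgroup of $\GL_r$ --- so that only tensor powers of bounded size enter. Assembling these yields the explicit $R_0$, hence $N_1=R_0\cdot M_0$ and a compatible $N_2\ge N_1$, completing the equivalence.
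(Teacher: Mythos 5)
The first half of your $(3)\Rightarrow(1)$ argument is sound and runs parallel to the paper: your power-sum/Newton-polygon step is essentially the paper's Lemma~\ref{lema2}, and your use of Kronecker's theorem together with the bounded degree of the eigenvalue field to get roots of unity of uniformly bounded order is the paper's quantity $M(E,r)$ (the paper, incidentally, only needs to run the $p$-adic lemma at places over $p$, since Lafforgue's theorem gives integrality at all other finite places for free). The genuine gap is the last step, and it is the heart of the theorem: to pass from ``every Frobenius at a point of degree $\le R_0$ has eigenvalues that are roots of unity of order dividing $B'$'' to finiteness of $G_{arith}$, you invoke an \emph{effective Zariski-density} statement --- that Frobenii of degree $\le R_0$ already Zariski-generate $G_{arith}$, with $R_0$ explicit and uniform over the whole family --- which you do not prove, and whose missing ingredient (your ``uniform bound on which representations must be tested'', i.e.\ a quantitative bound on proper closed subgroups of $\GL_r$) is not a routine verification but a serious open-ended piece of mathematics. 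Your intermediate reasoning is also flawed as written: the Zariski closure of a union of conjugacy classes is not a subgroup, and the group \emph{generated} by elements of order dividing $B'$ need not have exponent dividing $B'$ (products of torsion elements can have infinite order), so the ``finite exponent $\Rightarrow$ finite'' argument can only be applied to $G_{arith}$ itself \emph{after} density is known, never to a partial closure $H$ beforehand.

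The paper's key idea, which you are missing, is that no such density statement is needed. From ``$\mathrm{Frob}_x^M$ acts trivially for all $x$ of degree $\le N$'' one forms the $M$-th Adams power $\HH^{[M]}=[\FF]-[\GGG]$ in the Grothendieck group, where $\FF=\oplus_{i\ \mathrm{even}}R(\wedge^i\rho)\HH$ and $\GGG=\oplus_{i\ \mathrm{odd}}R(\wedge^i\rho)\HH$ are honest semisimple sheaves of explicitly bounded rank and with breaks bounded by those of $\HH$; the hypothesis says precisely that $\Phi_{\FF,m}=\Phi_{\GGG\oplus\Ql^r,m}$ for all $m\le N$. Deligne's effective theorem for curves (Theorem~\ref{thm-deligne}) --- trace functions agreeing up to an explicit bound force an isomorphism of semisimple sheaves --- then yields $\FF\cong\GGG\oplus\Ql^r$, i.e.\ $\HH^{[M]}=[\Ql^r]$ identically, so $\mathrm{Frob}_x^M$ acts trivially at \emph{every} point of \emph{every} degree; the non-effective criterion (Proposition~\ref{old}) then gives finite monodromy (this is Proposition~\ref{propcurves}, and the trace version follows by prefixing Lemma~\ref{lema2} and Lafforgue). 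In other words, the only ``effective Chebotarev'' input the paper ever uses is Deligne's theorem on trace functions, applied to auxiliary sheaves built functorially from $\HH$ --- the statement that bounded-degree Frobenii generate the monodromy group, which is where your proposal stalls, never enters at all.
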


Unfortunately, the explicit constants $N_1$ and $N_2$ we obtain in these theorems are still too large to be useful in practice, so at the moment these results are mainly of theoretical interest. See section \ref{examples} for some numerical examples and some potential ways to optimize them.

\section{$\ell$-adic sheaves and trace functions}
Let $X$ be as in the previous section. Every sheaf $\FF\in\mathcal S(X,\Ql)$ defines a trace function $\Phi_\FF:\coprod_{m\geq 1}X(k_m)\to\Ql$ (where $k_m$ denotes the degree $m$ extension of $k$) given by
$$
\Phi_\FF(m,t)=\mathrm{Tr}(Frob_t|\FF_{\bar t})
$$
where $t\in X(k_m)$ and $\bar t$ is a geometric point over $t$.
By Chevotarev's density theorem, two semisimple sheaves $\FF,\GGG\in\mathcal S(X,\Ql)$ are isomorphic if and only if $\Phi_\FF=\Phi_\GGG$.

For every $m\geq 1$, denote by $\Phi_{\FF,m}:X(k_m)\to\Ql$ the restriction of $\Phi_\FF$ to $X(k_m)$. For a smooth curve $X$, Deligne proved \cite{deligne-finitude} the following bounded version of the previous statement. Let $Y$ be a smooth compactification of $X$, and $D=Y\backslash X$. For every $s\in D(\bar k)$ and $\FF\in\mathcal S(X,\Ql)$, let $\alpha_s(\FF)$ be the largest break of $\FF$ at $s$. Then we have
\begin{theorem}\cite[Proposition 2.5]{deligne-finitude}\label{thm-deligne}
 Let $\FF,\GGG\in \mathcal S(X,\Ql)$ be two lisse semisimple sheaves of rank $r$. Then $\FF$ and $\GGG$ are isomorphic if and only if $\Phi_{\FF,m}=\Phi_{\GGG,m}$ for every $m\leq N$, where
 $$
 N=2r+\left\lfloor 2\log^+_q\left(2r^2\left(b_1(X)+\max_{s\in D(\bar k)}\sup\{\alpha_s(\FF),\alpha_s(\GGG)\}\right)\right)\right\rfloor,
 $$
 $\log_q^+=\max\{0,\log_q\}$ and $b_1(X)=\dim\HHH^1_c(X,\Ql)$ is the first Betti number with compact supports of $X$.
\end{theorem}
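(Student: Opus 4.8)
The plan is to propagate the equality of trace functions from the range $m\le N$ to all $m$ and then invoke Chebotarev, as recalled above, to conclude $\FF\cong\GGG$. The starting point is that pointwise equality $\Phi_{\FF,m}=\Phi_{\GGG,m}$ on $X(k_m)$ may be multiplied, point by point, by the trace function of any auxiliary sheaf. Taking this auxiliary sheaf to be $\GGG^\vee$ and then $\FF^\vee$ and using multiplicativity of traces under tensor product, I obtain, for every $m\le N$,
$$ S_m(\underline{\Hom}(\GGG,\FF))=S_m(\underline{\mathrm{End}}\,\GGG),\qquad S_m(\underline{\mathrm{End}}\,\FF)=S_m(\underline{\Hom}(\FF,\GGG)), $$
where $S_m(\mathcal M)=\sum_{t\in X(k_m)}\Phi_{\mathcal M,m}(t)$ is the total Frobenius trace over $k_m$ of the rank-$\le r^2$ lisse sheaf $\mathcal M$. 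This reduces the theorem to extracting, from the agreement of these total traces for $m\le N$, the equality of the three numbers $\dim\Hom(\FF,\GGG)$, $\dim\mathrm{End}\,\FF$ and $\dim\mathrm{End}\,\GGG$ (arithmetic, i.e.\ over $\pi_1(X,\bar\eta)$). By the elementary multiplicity criterion for semisimple modules one has $\sum_i(a_i-b_i)^2=0$ once these three numbers agree, so their equality forces $\FF\cong\GGG$.

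Next I would convert total traces into cohomology by Grothendieck--Lefschetz, $S_m(\mathcal M)=\sum_{i=0}^{2}(-1)^i\Tr(F^m\mid\HHH^i_c(\overline X,\mathcal M))$, and identify the three dimensions above with the multiplicity of the Frobenius eigenvalue $q$ on the top cohomology $\HHH^2_c$ of the corresponding $\underline{\Hom}$-sheaves; this is Poincar\'e duality together with $\HHH^2_c(\overline X,\mathcal M)=\mathcal M_{\pi_1(\overline X,\bar\eta)}(-1)$. After normalizing the (\ι-pure) irreducible constituents so that the relevant sheaf has weight $0$, Deligne's purity (Weil~II) makes $\HHH^2_c$ the dominant contribution: its eigenvalues have absolute value $q$ and form a main term of size $q^m$, with the eigenvalue exactly $q$ occurring with multiplicity equal to the arithmetic $\Hom$-dimension, whereas $\HHH^0_c$ and $\HHH^1_c$ are mixed of weight $\le 1$ and contribute an error term bounded by $E\,q^{m/2}$, where $E$ is the total dimension of the low-weight cohomology of the sheaves involved.

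The size of $E$ is controlled by Grothendieck--Ogg--Shafarevich: for a lisse sheaf $\mathcal M$ of rank $\rho$ one has $\dim\HHH^1_c(\overline X,\mathcal M)\le\rho\,b_1(X)+\sum_{s\in D}\mathrm{Swan}_s(\mathcal M)\le\rho(b_1(X)+\max_s\alpha_s(\mathcal M))$. Applied with $\rho=r^2$, and with the breaks of $\underline{\Hom}(\FF,\GGG)$ bounded by $\max_{s}\sup\{\alpha_s(\FF),\alpha_s(\GGG)\}$, this reproduces exactly the quantity $r^2(b_1(X)+\max_s\alpha_s)$ inside the logarithm of the statement. Subtracting the two identities term by term, the difference of the weight-$2$ main terms is forced to have absolute value $\le 2E\,q^{m/2}$ for every $m\le N$; since a genuine discrepancy in the multiplicity of the eigenvalue $q$ would contribute a term of size $q^m$, it cannot be masked by an error of size $q^{m/2}$ once $q^{m/2}$ exceeds the number of competing eigenvalues. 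This is precisely where the threshold $N\ge 2\log_q^+(2E)$ enters, the floor and the factor $2$ being absorbed into the stated formula.

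The delicate point, and the step I expect to be the main obstacle, is the final one: upgrading the inequality ``main term dominates error term'' to the \emph{exact} equality of the eigenvalue-$q$ multiplicities. All the weight-$2$ eigenvalues share the absolute value $q$, so one must separate the eigenvalue exactly equal to $q$ from its companions of the same size and rule out near-cancellations among them; I would do this by a short Vandermonde/averaging argument over a window of $m$-values whose length is proportional to the number of such eigenvalues, which (after grouping) is what accounts for the additive $2r$ in the bound, using crucially that the eigenvalues are Weil numbers of bounded archimedean size. The rest is careful bookkeeping to match $b_1(X)$, the factor $2$, and the floor exactly as written; none of it is conceptually hard once the weight dichotomy and the Euler--Poincar\'e bound on $E$ are in place.
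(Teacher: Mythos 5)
The overall second-moment strategy you sketch is indeed the skeleton of Deligne's proof (which the paper reproduces when proving its Theorem \ref{traces}), but there is a genuine gap exactly at the step you yourself flag, and in the purity claims feeding into it. First, $\FF$ and $\GGG$ are \emph{not} assumed pure, so the weight dichotomy you invoke for the $\mathrm{End}$/$\mathrm{Hom}$ sheaves is not available as stated: by Lafforgue the irreducible constituents are pure of weight $0$ only after twisting by geometrically constant rank-one factors $b^{\deg}$ with $b\in\Ql^\times$ arbitrary, and these twists scatter the absolute values of the top-cohomology eigenvalues of $\HHH^2_c$, so the picture ``main term of size $q^m$ whose eigenvalue-$q$ multiplicity equals the arithmetic $\Hom$-dimension, plus an error $\leq E\,q^{m/2}$'' breaks down until the unitary part is separated from the scalar part. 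This is precisely what the decomposition $\FF\cong\bigoplus_i p_{i\ast}(\HH_i\otimes\mathcal W_i)$ accomplishes in the paper's argument: the weight machinery is applied only to the finite-order-determinant, weight-$0$ sheaves $\HH_{i,j}$, while all arbitrary scalars are quarantined in the geometrically constant multiplicity spaces $\mathcal W_i,\mathcal W'_i$ (equivalently, in the coefficients $\lambda_{i,j}$ of the linear-independence lemma, normalized so $|\lambda_{i,j}|\leq 1$). Your parenthetical ``after normalizing the constituents'' re-introduces exactly the data ($\mathcal W_i$ versus $\mathcal W'_i$) that your $\Hom$-dimension count is supposed to compare, so it cannot be waved through.

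Second, the counting in your final Vandermonde step does not produce the stated bound. The number of weight-$2$ eigenvalues of the four sheaves $\underline{\mathrm{End}}(\FF)$, $\underline{\mathrm{End}}(\GGG)$, $\underline{\Hom}(\FF,\GGG)$, $\underline{\Hom}(\GGG,\FF)$ is controlled only by the dimensions of geometric coinvariants of rank-$r^2$ sheaves, i.e.\ by $O(r^2)$, so a window ``proportional to the number of such eigenvalues'' yields $N=O(r^2)+\log$-term, not $2r+\lfloor 2\log_q^+(\cdots)\rfloor$; the parenthetical ``after grouping'' is doing all the work, and that grouping \emph{is} the isotypic decomposition. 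The correct order of operations, as in the paper: use the weight bound for each single $m>N_0$ (no averaging over $m$ is needed there, because the coefficients are normalized) to prove that the functions $\Phi_{\HH_{i,j},m}$ are linearly independent on $X(k_m)$; deduce $\mathrm{Tr}(F_{k_m}|\mathcal W_i)=\mathrm{Tr}(F_{k_m}|\mathcal W'_i)$ for each block $i$ \emph{separately}; and only then apply the power-sum comparison of \cite[Lemme 2.9]{deligne-finitude} blockwise to the eigenvalue multisets of $F_{k_{n_i}}$ on $\mathcal W_i,\mathcal W'_i$, which have size $\leq r/n_i$, so that $\lfloor 2r/n_i\rfloor$ consecutive exponents in the range $N_0<ln_i\leq N_0+2r$ suffice -- this is where the additive $2r$ comes from. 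A global separation of the eigenvalue exactly $q$ inside $\HHH^2_c$ of the $\Hom$-sheaves, which is what you propose, both lacks the per-block structure and, absent semisimplicity of Frobenius on coinvariants (which again is supplied by the semisimplicity of the $\mathcal W_i$), conflates generalized eigenvalue multiplicity with the invariant dimension you actually need.
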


We will generalize this to higher dimensional varieties, using Sawin's complexity theory \cite{quantitative} as a replacement for the ramification data. Assume that $X$ is geometrically irreducible, smooth quasi-projective of dimension $d$ over $k$, given with an embedding $u:X\hookrightarrow\PP^n_k$. See \cite[Definition 3.2 and 6.4]{quantitative} for the definition of the complexity $c_u(\FF)\in\mathbb N$ of a lisse sheaf $\FF\in\mathcal S(X,\Ql)$ (or, more generally, an object of $\Dbc(X,\Ql)$). By \cite[Theorem 5.2]{quantitative} there is an explicit constant $A_n$ such that $c_u(\FF\otimes\GGG)\leq A_n c_u(\FF)c_u(\GGG)$ for any $\FF,\GGG\in\mathcal S(X,\Ql)$. In fact, by \cite[Theorem 8.1]{quantitative}, one may take
$$
A_n=\frac{2^{17}}{3^4}e^{4/3}13^n(n+2)!
$$

We then have the following generalization of Theorem \ref{thm-deligne}:

\begin{theorem}\label{traces}
 Let $\FF,\GGG\in \mathcal S(X,\Ql)$ be two lisse semisimple sheaves of rank $r$ of complexity $\leq C$. Then $\FF$ and $\GGG$ are isomorphic if and only if $\Phi_{\FF,m}=\Phi_{\GGG,m}$ for every $m\leq N$, where
 $$
 N=2r+\lfloor 2\log_q^+(2A_n  C^2)\rfloor.
 $$
\end{theorem}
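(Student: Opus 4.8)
The forward implication is immediate, since isomorphic sheaves have identical trace functions; the content lies in the converse, which I would prove in contrapositive form. Assume $\FF\not\cong\GGG$; the goal is to exhibit some $m\le N$ with $\Phi_{\FF,m}\neq\Phi_{\GGG,m}$. The natural auxiliary object is the internal Hom sheaf $\mathcal{H}om(\FF,\GGG)=\FF^\vee\otimes\GGG$, together with its companions $\mathcal{H}om(\GGG,\FF)$, $\mathcal{E}nd(\FF)$ and $\mathcal{E}nd(\GGG)$. By the multiplicativity of complexity under tensor product quoted above, together with the bounded behaviour of complexity under duality, each of these four sheaves has complexity at most $A_n C^2$; this is exactly where the quantity $A_nC^2$ entering $N$ originates.

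The engine of the proof is the Grothendieck--Lefschetz trace formula, which converts pointwise trace data into spectral data of Frobenius on cohomology. For any lisse $\HH$ on $X$ one has
$$
\sum_{t\in X(k_m)}\Phi_{\HH,m}(t)=\sum_{i=0}^{2d}(-1)^i\Tr\!\big(\mathrm{Frob}_q^m\mid \HHH^i_c(\overline X,\HH)\big).
$$
Applying this to the four sheaves above and assembling the contributions, the hypothesis $\Phi_{\FF,m}=\Phi_{\GGG,m}$ for $m\le N$ forces the corresponding alternating sums of $m$-th powers of Frobenius eigenvalues to vanish for all such $m$. Organised correctly --- and, when $\FF$ and $\GGG$ are pure of weight $0$ as in the applications, most transparently through the non-negative second moment $M_m=\sum_{t\in X(k_m)}|\Phi_{\FF,m}(t)-\Phi_{\GGG,m}(t)|^2$ --- this isolates the leading term: the top cohomology $\HHH^{2d}_c(\overline X,\mathcal{H}om(\FF,\GGG))$ records, up to a Tate twist and duality, the space of geometric homomorphisms $\Hom_{\pi_1(\overline X)}(\FF,\GGG)$, whose Frobenius-fixed part in turn records the arithmetic homomorphisms. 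Thus the $m$-th moments detect, to leading order $q^{dm}$, whether $\FF$ and $\GGG$ share constituents, while the lower cohomology contributes only error terms of strictly smaller weight.

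The decisive quantitative input is Sawin's complexity theory, which plays the role that the Grothendieck--Ogg--Shafarevich/Euler--Poincar\'e formula plays in Deligne's curve case: it bounds the total dimension $\sum_i\dim \HHH^i_c(\overline X,\mathcal{H}om(\FF,\GGG))$ --- hence the number of Frobenius eigenvalues to be controlled --- by an explicit function of the complexity, of size $A_nC^2$. Once the number of eigenvalues and their weights are bounded, the proof concludes by a power-sum (Vandermonde-type) argument: a signed sum of boundedly many powers $\lambda^m$ with $|\lambda|\le q^{d}$ that vanishes for all $m\le N$ must have its top-weight terms cancel, and peeling off weights successively forces all multiplicities to agree. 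The additive $2r$ accommodates the top-cohomology (main) term, while the margin $2\log_q^+(2A_nC^2)$ is precisely what is needed for $q^{N/2}$ to dominate the $\le A_nC^2$ error eigenvalues; tracking constants through this step yields the stated value of $N$. Agreement of all arithmetic multiplicities then gives $\FF\cong\GGG$, contradicting the assumption.

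The main obstacle is twofold. First, one must secure the uniform Betti-number bound $\sum_i\dim\HHH^i_c(\overline X,\mathcal{H}om(\FF,\GGG))\le A_nC^2$ purely from the complexity; this is the heart of the matter in the higher-dimensional setting, supplied by Sawin's estimates, with the tensor-product bound $c_u(\FF\otimes\GGG)\le A_n c_u(\FF)c_u(\GGG)$ doing the essential bookkeeping. Second, and more delicate, is separating arithmetic from geometric isomorphism: two sheaves may be geometrically but not arithmetically isomorphic, and telling these apart requires extracting the Frobenius-fixed part of $\Hom_{\pi_1(\overline X)}(\FF,\GGG)$ from the moments. In the weight-$0$ case the positivity of $M_m$ together with averaging over $m$ settles this cleanly; in general one must keep careful track of weights so that the successive-cancellation argument is not spoiled by sign cancellation between cohomological degrees of differing parity.
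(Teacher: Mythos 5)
Your proposal assembles the right ingredients (Lefschetz trace formula, orthogonality in top cohomology, Weil~II weight bounds, Sawin's complexity bound on total Betti numbers, a power-sum argument), but the way you combine them contains a genuine gap that prevents reaching the stated $N$. The paper, following Deligne, first decomposes $\FF$ and $\GGG$ into geometric isotypic pieces $p_{i\ast}(\mathcal H_i\otimes\mathcal W_i)$ and $p_{i\ast}(\mathcal H_i\otimes\mathcal W'_i)$, with $\mathcal H_i$ geometrically irreducible of finite-order determinant (hence pure) and $\mathcal W_i,\mathcal W'_i$ geometrically constant, and then proves that \emph{for each single} $m>N_0=2\log_q^+(2A_nC^2)$ the trace functions $\Phi_{\mathcal H_{i,j},m}$ are linearly independent \emph{as functions of the point} $t\in X(k_m)$. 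This converts the hypothesis $\Phi_{\FF,m}=\Phi_{\GGG,m}$, for each such $m$ separately, into the exact equalities $\Tr(F_{k_m}|\mathcal W_i)=\Tr(F_{k_m}|\mathcal W'_i)$, i.e.\ into power sums of the at most $\lfloor r/n_i\rfloor$ Frobenius eigenvalues of the multiplicity spaces; then a window of $2r$ consecutive values of $m$ suffices by the Newton/Vandermonde lemma \cite[Lemme 2.9]{deligne-finitude}. Your argument instead sums over $t$ (the second moment $M_m$), which destroys exactly the $t$-variable on which the linear-independence argument runs: after aggregation, all that remains is a single signed exponential sum in $m$ whose number of frequencies is bounded only by the Betti number bound, of size $A_nC^2$.

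This loss is fatal quantitatively, for either of the two ways one could try to finish. (i) Exact-vanishing Vandermonde in $m$: $M_m=0$ for consecutive $m$ forces all coefficients to vanish only once the number of values of $m$ exceeds the number of distinct Frobenius eigenvalues across all the $\HHH^a_c$, so this route needs $N$ of order $A_nC^2$, vastly larger than $2r+\lfloor 2\log_q^+(2A_nC^2)\rfloor$. (ii) Error domination, as you propose: writing $M_m=q^{md}\beta_m+E_m$ with $\beta_m\geq 0$ the weight-$2d$ part and $|E_m|\leq A_nC^2q^{m(d-1/2)}$, the hypothesis only gives $\beta_m\leq A_nC^2q^{-m/2}$, and to conclude you would need a \emph{lower} bound on a nonzero $\beta_m$. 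No effective such bound exists: taking $\GGG=\alpha^{\deg}\otimes\FF$ with $\alpha$ a unit scalar extremely close to $1$, the main term is proportional to $|1-\alpha^m|^2$, nonzero yet arbitrarily small for all $m\leq N$; so ``dominating the error eigenvalues'' and ``peeling off weights'' cannot distinguish $\beta_m=0$ from $\beta_m$ tiny. (This example does not contradict the theorem, since those trace functions genuinely differ at small $m$; it defeats your scheme because the only information you retained about the difference is its size.) Two further issues: the theorem assumes no purity, whereas your identity $\overline{\Phi_{\GGG,m}(t)}=\Phi_{\GGG^\vee,m}(t)$ and your weight bookkeeping require punctual purity of weight $0$; and the geometric constituents need not be defined over $k$, as Frobenius permutes them---this is what the induction from $X_{n_i}$, the Frobenius conjugates $\mathcal H_{i,j}$, and the windows of length $\lfloor 2r/n_i\rfloor$ in the paper take care of. The paper's decomposition resolves both points at once: all non-pure and arithmetic data is pushed into the geometrically constant factors $\mathcal W_i$, while the pure constituents $\mathcal H_{i,j}$ carry the orthogonality argument.
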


Note that the smoothness hypotheses does not represent a loss of generality, since $\FF$ and $\GGG$ are isomorphic if and only if their restrictions to a dense open smooth $U\subseteq X$ are.

The proof follows closely that of \cite[Proposition 2.5]{deligne-finitude}. As there, we can decompose $\FF$ and $\GGG$ as direct sums
$$
\FF\cong\bigoplus_{i\in I}p_{i\ast}({\mathcal H}_i\otimes{\mathcal W}_i)
$$
$$
\GGG\cong\bigoplus_{i\in I}p_{i\ast}({\mathcal H}_i\otimes{\mathcal W'}_i)
$$
where, for every $i$, there is some $n_i\geq 1$ such that ${\mathcal H}_i$ is a geometrically irreducible lisse sheaf on $X_{n_i}:=X\otimes_k k_{n_i}$ with determinant of finite order, $\mathcal W_i$ and $\mathcal W'_i$ are geometrically constant on $X_{n_i}$ (and at least one of them is non-zero), $p_i:X_{n_i}\to X$ is the natural projection, and the ${\mathcal H}_i$ and their Galois conjugates are pairwise geometrically non-isomorphic.

Let $\mathcal H_{i,j}$ for $0\leq j < n_i$ denote the Frobenius conjugates of $\mathcal H_i$. For every $m\geq 1$, let $I_m$ be the set $\{i\in I: n_i|m\}$. As in \cite[Lemme 2.6]{deligne-finitude}, we can show:

\begin{lemma}
 Let $C=\max\{c_u(\FF),c_u(\GGG)\}$ and $N_0=2\log_q^+(2A_n C^2)$. For $m>N_0$, the functions $\Phi_{\mathcal H_{i,j},m}:X(k_m)\to\Ql$ ($i\in I_m$, $0\leq j < n_i$) are linearly independent. 
\end{lemma}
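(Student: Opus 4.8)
The plan is to mimic the proof of \cite[Lemme 2.6]{deligne-finitude}, showing linear independence of trace functions by an orthogonality argument using the pairing given by summing products of trace functions over rational points, and then estimating that pairing via the Grothendieck--Lefschetz trace formula together with the complexity bounds. Let me think through the key steps.

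Suppose we have a linear dependence $\sum_{i,j} \lambda_{i,j} \Phi_{\mathcal H_{i,j},m} = 0$ on $X(k_m)$. The natural way to detect the coefficients is to pair against another trace function: for a pair of indices, form $\sum_{t \in X(k_m)} \Phi_{\mathcal H_{i,j},m}(t)\overline{\Phi_{\mathcal H_{i',j'},m}(t)}$, where the conjugate corresponds to the dual sheaf. By the Lefschetz trace formula this sum equals $\sum_k (-1)^k \mathrm{Tr}(\mathrm{Frob}_{k_m}\mid \mathrm H^k_c(\overline X, \mathcal H_{i,j}\otimes \mathcal H_{i',j'}^\vee))$. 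The key point is that when the sheaves $\mathcal H_{i,j}$ and $\mathcal H_{i',j'}$ are geometrically non-isomorphic (which holds precisely because the $\mathcal H_i$ and their Galois conjugates are pairwise geometrically non-isomorphic, by the decomposition hypothesis), the top cohomology $\mathrm H^{2d}_c$ of the tensor product $\mathcal H_{i,j}\otimes\mathcal H_{i',j'}^\vee$ vanishes, so the ``diagonal'' term that would otherwise dominate disappears; when they are isomorphic one gets a main term of size $q^{dm}$ (up to the rank factors) from $\mathrm H^{2d}_c$.

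First I would make the pairing orthogonality precise: the main term coming from $\mathrm H^{2d}_c$ is nonzero exactly on the diagonal $i=i', j=j'$, and all off-diagonal and lower-cohomology contributions must be bounded strictly below the main term. To bound these error terms I would use Sawin's complexity formalism: the complexity of $\mathcal H_{i,j}\otimes \mathcal H_{i',j'}^\vee$ is controlled by $A_n$ times the product of the complexities of the factors, which are in turn bounded in terms of $C$, and the complexity controls both the dimensions of all the compactly-supported cohomology groups and the weights (hence absolute values) of their Frobenius eigenvalues by Deligne's bounds. This yields an estimate of the form $\left|\sum_{t} \Phi_{\mathcal H_{i,j},m}\overline{\Phi_{\mathcal H_{i',j'},m}}\right| \leq (\text{const depending on complexity}) \cdot q^{(d-1/2)m}$ on the error, versus a main term of order $q^{dm}$ on the diagonal. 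Setting the threshold so the geometric sum of error terms is strictly smaller than $q^{dm}$ for $m > N_0$ forces the Gram matrix of the pairing to be diagonally dominant, hence nonsingular, which gives linear independence.

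The main obstacle will be pinning down the precise complexity-to-cohomology estimate so that the cutoff comes out exactly as $N_0 = 2\log_q^+(2A_n C^2)$: one must verify that the total dimension of the relevant cohomology (summed over all degrees and over all pairs in $I_m$, which itself has size bounded by the complexity) times the purity weight bound $q^{(d-1/2)m}$ is dominated by the diagonal main term $q^{dm}$ precisely when $m$ exceeds $2\log_q^+(2A_n C^2)$. Concretely I would track that the Euler characteristic / Betti number bounds from \cite{quantitative} contribute the factor $A_n C^2$, that purity supplies the half-power gap $q^{m/2}$ separating main from error, and that the factor $2$ in $2A_n C^2$ absorbs the summation over the off-diagonal entries; solving the resulting inequality $q^{m/2} > 2A_n C^2$ for $m$ gives exactly $m > N_0$. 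The delicate bookkeeping is ensuring no hidden dependence on $r$ or $m$ sneaks into the constant, so that the threshold depends only on $C$ and $q$ as stated.
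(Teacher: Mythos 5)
Your proposal is correct and takes essentially the same route as the paper: the paper's own proof is exactly your orthogonality argument, phrased as normalizing a putative linear dependence so its largest coefficient is $1$ and pairing against that single trace function (which is precisely the row of your Gram matrix where diagonal dominance is checked), then extracting the main term $q^{md}$ from $\HHH^{2d}_c$ via geometric irreducibility and pairwise non-isomorphy, and bounding the lower-degree cohomology by Weil II purity together with Sawin's complexity estimates to reach $q^{m/2}\leq 2A_nC^2$. The one piece of bookkeeping your sketch glosses over (and which the paper handles with its ``WLOG $\mathcal H_{i_0,j_0}$ is a direct summand of $\FF$'' step) is that in each row the dual factor $\widehat{\mathcal H_{i,j}}$ must be compared against $\widehat{\FF}$ or $\widehat{\GGG}$ alone, not against $\widehat{\FF}\oplus\widehat{\GGG}$, so the row sum is bounded by $A_n\cdot(2C)\cdot C=2A_nC^2$ rather than $4A_nC^2$, which is what yields the stated threshold $N_0=2\log_q^+(2A_nC^2)$.
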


Let $\sum_{i,j}\lambda_{i,j}\Phi_{\mathcal H_{i,j},m}=0$ be a non-trivial linear combination, and assume without loss of generality that $\lambda_{i_0,j_0}=1$ and $|\lambda_{i,j}|\leq 1$ for every $i,j$. We may also assume that $\mathcal H_{i_0,j_0}$ is a direct summand of $\FF$, by interchanging $\FF$ and $\GGG$ if necessary. Then
$$
0=\sum_{t\in X(k_m)}\left(\sum_{i,j}\lambda_{i,j}\Phi_{\mathcal H_{i,j},m}(t)\right)\overline{\Phi_{\mathcal H_{i_0,j_0},m}(t)}=
$$
$$
=\sum_{i,j}\lambda_{i,j}\sum_{t\in X(k_m)}\Phi_{\mathcal H_{i,j},m}(t)\overline{\Phi_{\mathcal H_{i_0,j_0},m}(t)}=
\sum_{i,j}\lambda_{i,j}\sum_{t\in X(k_m)}\Phi_{\mathcal H_{i,j}\otimes\widehat{\mathcal H_{i_0,j_0}},m}(t)=
$$
$$
=\sum_{i,j}\lambda_{i,j}\sum_{a=0}^{2d}(-1)^a\mathrm{Tr}(F_{k_m}|\HHH^a_c(X\otimes\bar k,\mathcal H_{i,j}\otimes\widehat{\mathcal H_{i_0,j_0}})).
$$
Since the $\HH_{i,j}$ are geometrically irreducible and pairwise non-isomorphic, we have $\HHH^{2d}_c(X\otimes\bar k,\mathcal H_{i,j}\otimes\widehat{\mathcal H_{i_0,j_0}})=0$ if $(i,j)\neq (i_0,j_0)$ and $\Ql(-d)$ if $(i,j)=(i_0,j_0)$, so
$$
0=q^{md}+\sum_{i,j}\lambda_{i,j}\sum_{a=0}^{2d-1}(-1)^a\mathrm{Tr}(F_{k_m}|\HHH^a_c(X\otimes\bar k,\mathcal H_{i,j}\otimes\widehat{\mathcal H_{i_0,j_0}}))
$$
and
$$
q^{md}=\left|\sum_{i,j}\lambda_{i,j}\sum_{a=0}^{2d-1}(-1)^a\mathrm{Tr}(F_{k_m}|\HHH^a_c(X\otimes\bar k,\mathcal H_{i,j}\otimes\widehat{\mathcal H_{i_0,j_0}}))\right|\leq
$$
$$
\leq \sum_{i,j}\sum_{a=0}^{2d-1}|\mathrm{Tr}(F_{k_m}|\HHH^a_c(X\otimes\bar k,\mathcal H_{i,j}\otimes\widehat{\mathcal H_{i_0,j_0}}))|\leq
$$
$$
\leq q^{md-\frac{m}{2}}\sum_{i,j}\sum_{a=0}^{2d-1}\dim\,\HHH^a_c(X\otimes\bar k,\mathcal H_{i,j}\otimes\widehat{\mathcal H_{i_0,j_0}})\leq
$$
\begin{equation}\label{ineq1}
\leq q^{md-\frac{m}{2}}\sum_{a=0}^{2d-1}\dim\,\HHH^a_c(X\otimes\bar k,\mathcal (\FF\oplus\GGG)\otimes\widehat{\FF})
\end{equation}
since $\oplus_{i,j}\mathcal H_{i,j}$ and ${\mathcal H_{i_0,j_0}}$ are direct summands of $\FF\oplus\GGG$ and $\FF$ respectively.

By the properties of the complexity \cite[Theorem 5.1, Lemma 6.12, Theorem 8.1]{quantitative}, we have
$$
\sum_{a=0}^{2d-1}\dim\,\HHH^a_c(X\otimes\bar k,\mathcal (\FF\oplus\GGG)\otimes\widehat{\FF})\leq
$$
$$
\leq\sum_{a=0}^{2d}\dim\,\HHH^a_c(X\otimes\bar k,\mathcal \FF\otimes\widehat{\FF})+\sum_{a=0}^{2d}\dim\,\HHH^a_c(X\otimes\bar k,\mathcal \GGG\otimes\widehat{\FF})\leq
$$
$$
\leq A_n(c_u(\FF)^2+c_u(\FF)c_u(\GGG))\leq 2 A_n C^2.
$$
So, from \ref{ineq1}, we deduce
$$
q^{m/2}\leq 2A_n C^2
$$
or
$$
m\leq 2\log_q(2A_n C^2)
$$

The proof of Theorem \ref{traces} now concludes exactly as in \cite[2.8]{deligne-finitude}: for every $m>N_0$, we have
$$
\Phi_{\FF,m}=\sum_{i\in I_m}\sum_{j=1}^{n_i}\mathrm{Tr}(F_{k_m}|\mathcal W_i)\Phi_{\HH_{i,j},m}
$$
and
$$
\Phi_{\GGG,m}=\sum_{i\in I_m}\sum_{j=1}^{n_i}\mathrm{Tr}(F_{k_m}|\mathcal W'_i)\Phi_{\HH_{i,j},m}
$$
so, by the lemma, $\Phi_{\FF,m}=\Phi_{\GGG,m}$ if and only if $\mathrm{Tr}(F_{k_m}|\mathcal W_i)=\mathrm{Tr}(F_{k_m}|\mathcal W'_i)$ for every $i\in I_m$. For every $i\in I$, $\mathcal W_i$ and $\mathcal W'_i$ have dimension at most $\lfloor r/n_i\rfloor$ so, in order to show they are isomorphic (that is, that they have the same eigenvalues for the action of $F_{k_{n_i}}$), it suffices to show that the traces of the action of $F_{k_{n_i}}^l=F_{k_{ln_i}}$ on them coincide for $\lfloor 2r/n_i\rfloor$ consecutive values of $l$ \cite[Lemme 2.9]{deligne-finitude}. But, under the hypotheses of Theorem \ref{traces}, these traces coincide for $N_0<ln_i\leq N_0+2r$, in which there are at least $\lfloor 2r/n_i\rfloor$ possible values of $l$. 

\section{An effective criterion for finite monodromy}

For a finite extension $E$ of $\mathbb Q$ and a positive integer $r$, let $M(E,r)$ denote the least common multiple of the $n\geq 1$ such that $[E(\zeta_n):E]\leq r$, where $\zeta_n$ is a primitive $n$-th root of unity. For instance, we have
$$
M(\mathbb Q,r)=\prod_{\lambda\text{ prime}\leq r+1}\lambda^{\lfloor 1+\log_\lambda\frac{r}{\lambda-1}\rfloor}
$$
and, in general, $M(E,r)\leq M(\mathbb Q,r\cdot[E:\mathbb Q])$, since
$$
[E(\zeta_n):E]\leq r\Rightarrow [\mathbb Q(\zeta_n):\mathbb Q]\leq [E(\zeta_n):\mathbb Q]\leq r\cdot [E:\mathbb Q].
$$

Given an $\HH\in \mathcal S(X,\Ql)$, we say that $\HH$ is $E$-valued if, for every $m\geq 1$ and $x\in U(\Fqm)$, the characteristic polynomial of the Frobenius action on $\HH$ at $x$ has coefficients in $E$. By \cite[Th\'eor\`eme VII.6]{lafforgue}, if $\HH$ is irreducible and its determinant is arithmetically of finite order then it is $E$-valued for some finite extension $\mathbb Q\subseteq E$.

\begin{proposition}\label{prop2}
 Let $\HH$ be a geometrically irreducible lisse $\ell$-adic sheaf on $X$ of rank $r$, pure of weight $0$ and whose determinant is aritmetically of finite order, let $\mathbb Q\subseteq E$ be a finite extension such that $\HH$ is $E$-valued, and $M=M(E,r)$. Let
 
  $$
 N:=2R+ \lfloor 2\log_q^+(2 A_n (A_n^{M-1} C^M+r\cdot c_u(X))^2)\rfloor
 $$
 where
 $$
 R=\sum_{\stackrel{i=0}{i\text{ even}}}^{r-1}{{r+M-i-1}\choose{M}}{{M-1}\choose i}
 $$
%  
%   For every $x\in\overline D$, let $e_x\geq 0$ and assume that the breaks of $\HH$ at $x$ are $\leq e_x$. Let $e:=\sum_{x\in\overline D}e_x$, $M:=M(E,r)$ and
%  $$
%  N:=\left(\sum_{\stackrel{i=0}{i\text{ even}}}^{M-1}{{r+M-i-1}\choose{M}}{{M-1}\choose i}\right)^2(2g+e+\max\{1,\#\overline D\}).
%  $$
%  
 Then $\HH$ has finite (arithmetic and geometric) monodromy if and only if all Frobenius eigenvalues of $\HH$ at $t$ are roots of unity for every $m\leq N$ and every $x\in X(\Fqm)$.
\end{proposition}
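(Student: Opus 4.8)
The easy implication is immediate: if $G_{geom}$ (equivalently $G_{arith}$, by Proposition \ref{old}) is finite, then every Frobenius eigenvalue of $\HH$ at every point is a root of unity, so in particular the eigenvalue condition holds for all $m\le N$. For the converse I would first convert the hypothesis into an exact algebraic condition. Since $\HH$ is $E$-valued of rank $r$, the Frobenius characteristic polynomial at any $x\in X(\Fqm)$ has degree $r$ with coefficients in $E$; hence if an eigenvalue $\alpha$ is a root of unity $\zeta_n$, then $[E(\zeta_n):E]\le r$, so $n\mid M=M(E,r)$ and $\alpha^M=1$. Thus the assumption ``all eigenvalues are roots of unity for $m\le N$'' is equivalent to ``$\alpha_1^M=\cdots=\alpha_r^M=1$ for $m\le N$'', where $\alpha_1,\dots,\alpha_r$ denote the eigenvalues of $F_{k_m}$ at $x$; in particular $\sum_i\alpha_i^M=r$ at every such point.

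Next I would encode $\sum_i\alpha_i^M$ as a genuine trace function so that Theorem \ref{traces} applies. The power sum expands in hook Schur functions as $p_M=\sum_{i=0}^{M-1}(-1)^i s_{(M-i,1^i)}$, so in the Grothendieck group the $M$-th Adams operation decomposes as $\psi^M\HH=\mathcal A-\mathcal B$ with $\mathcal A=\bigoplus_{i\text{ even}}\mathbb S_{(M-i,1^i)}\HH$ and $\mathcal B=\bigoplus_{i\text{ odd}}\mathbb S_{(M-i,1^i)}\HH$, where $\mathbb S_\lambda$ is the Schur functor (the hooks with more than $r$ rows vanish, which is why the index truncates at $i=r-1$). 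Since $\mathrm{Tr}(F_{k_m}\mid\psi^M\HH)=\sum_i\alpha_i^M$ and each $\mathbb S_\lambda\HH$ is a direct summand of $\HH^{\otimes M}$, I would record that $\mathcal A$ and $\mathcal B\oplus\Ql^r$ are semisimple of the common rank $R=\sum_{i\text{ even}}\dim\mathbb S_{(M-i,1^i)}(\Ql^r)$ (the identity $\mathrm{rk}\,\psi^M\HH=r$ gives $\mathrm{rk}\,\mathcal A=\mathrm{rk}\,\mathcal B+r$); the hook-length/content computation yields $\dim\mathbb S_{(M-i,1^i)}(\Ql^r)=\binom{r+M-i-1}{M}\binom{M-1}{i}$, matching the stated $R$. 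Their complexity is at most $A_n^{M-1}C^M+r\cdot c_u(X)$, using $c_u(\HH^{\otimes M})\le A_n^{M-1}C^M$, $c_u(\Ql^r)\le r\cdot c_u(X)$, and the monotonicity of $c_u$ under direct summands from \cite[Theorem 5.1, Lemma 6.12, Theorem 8.1]{quantitative}.

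With this in place the condition $\sum_i\alpha_i^M=r$ reads $\Phi_{\mathcal A,m}=\Phi_{\mathcal B\oplus\Ql^r,m}$, and $N$ is precisely the threshold $2R+\lfloor 2\log_q^+(2A_n(A_n^{M-1}C^M+r\cdot c_u(X))^2)\rfloor$ produced by Theorem \ref{traces} for this pair. Hence verifying the eigenvalue condition for $m\le N$ forces $\mathcal A\cong\mathcal B\oplus\Ql^r$ after semisimplification, so $\sum_i\alpha_i^M=r$ holds at every $x\in X(\Fqm)$ for all $m$. I would then invoke purity: weight $0$ gives $|\iota(\alpha_i)|=1$ for every embedding $\iota:\Ql\to\CC$, so $\sum_i\iota(\alpha_i^M)=r$ with each summand of modulus $1$ forces $\iota(\alpha_i^M)=1$ for all $\iota$, whence $\alpha_i^M=1$. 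Thus all Frobenius eigenvalues are $M$-th roots of unity at every point, and Proposition \ref{old} (implication $(4)\Rightarrow(1)$) gives finite monodromy.

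The main obstacle is the middle step: producing a construction whose trace equals $\sum_i\alpha_i^M$ and whose numerical invariants match $R$ and the complexity bound exactly. Concretely, one must identify $\psi^M$ with the alternating sum of hook Schur functors, verify via the hook-length formula that the even part has the rank given by the displayed binomial sum, and control the complexity of $\mathcal A$ and $\mathcal B$ as summands of $\HH^{\otimes M}$. The surrounding trace-function and purity arguments are then routine given Theorem \ref{traces} and Proposition \ref{old}.
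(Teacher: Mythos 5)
Your proposal is correct and follows essentially the same route as the paper: your hook Schur functors $\mathbb{S}_{(M-i,1^i)}\HH$ are exactly the paper's $R(\wedge^i\rho)\HH$ (the paper cites \cite{rojas-rationality} for the identity $\psi^M=\sum_i(-1)^i[R(\wedge^i\rho)(-)]$ and for the rank formula, where you instead invoke the expansion $p_M=\sum_i(-1)^i s_{(M-i,1^i)}$ and the hook-length formula), and your subsequent steps --- bounding the complexity of the even/odd parts as summands of $\HH^{\otimes M}$, applying Theorem \ref{traces} to the pair $\bigl(\oplus_{i\text{ even}},\ \oplus_{i\text{ odd}}\oplus\Ql^r\bigr)$, and concluding via purity and Proposition \ref{old} --- are precisely the paper's argument, with the purity step (a sum of $r$ unit-modulus numbers equal to $r$ forces each to be $1$) made more explicit than in the paper itself.
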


\begin{proof}
Suppose that the Frobenius eigenvalues of $\HH$ at $x$ are roots of unity for every $m\leq N$ and every $x\in X(\Fqm)$. Since these eigenvalues are roots of a polynomial of degree $r$ over $E$, their order divides $M$ by definition. That is, the $M$-th power of Frobenius acts trivially on $\HH_{\bar x}$ for every $x\in X(\Fqm)$, $m\leq N$.

Let $\HH^{[M]}$ be the $M$-th Adams power of $\HH$. It is an element of the Grothendieck group of the category of constructible sheaves on $X$ and, by \cite[1]{fu2004moment}, its trace function is given by $\Phi_{\HH^{[M]}}(m,x)=\Phi_{\HH}(Mm,x)$.
 By \cite[Proposition 3.4]{rojas-rationality}, we have the explicit expression $\HH^{[M]}=\sum_{i=0}^{M-1}(-1)^i[R(\wedge^i\rho)\HH]$, where $\rho$ is the standard $(M-1)$-dimensional representation of the symmetric group $\mathcal S_M$. By the previous paragraph, we have
$$
\Phi_{\HH^{[M]}}(m,x)=r=\Phi_{\Ql^n}(m,x)
$$
for every $m\leq N$ and $x\in X(\Fqm)$. This can be rewritten in terms of ``real'' sheaves by splitting the positive and negative components of $\HH^{[M]}$: let $\FF=\oplus_{i\text{ even}}R(\wedge^i\rho)\HH$ and $\GGG=\oplus_{i\text{ odd}}R(\wedge^i\rho)\HH$, then
$$
\Phi_{\FF}(m,x)=\Phi_{\GGG\oplus\Ql^n}(m,x)
$$
for every $m\leq N$ and $x\in X(\Fqm)$.

Since $\FF$ and $\GGG$ are subsheaves of $\HH^{\otimes M}$, their complexity is bounded by that of $\HH^{\otimes M}$, which in turn, applying \cite[Theorem 5.2]{quantitative} repeatedly, is bounded by $A_n^{M-1} C^M$. Then the complexity of $\GGG\oplus\Ql^r$ is bounded by $A_n^{M-1} C^M+r\cdot c_u(X)$.

Since $\FF$ and $\GGG\oplus\Ql^r$ have rank $\sum_{i\text{ even}}{{r+M-i-1}\choose{M}}{{M-1}\choose i}$ \cite[Remark 3.6]{rojas-rationality} and are pure of weight 0, by Theorem \ref{traces} we conclude that $\FF\cong\GGG\oplus\Ql^n$ or, equivalently, $\HH^{[M]}=[\FF]-[\GGG]=[\Ql^n]$ in the Grothendieck group. That is, the $M$-th power of Frobenius acts trivially on $\HH_{\bar x}$ for every $x\in X(\Fqm)$ and every $m\geq 1$. By Proposition \ref{old}, we conclude that $\HH$ has finite monodromy.
\end{proof}

If $X$ is a smooth curve, then we can improve the bound by using Theorem \ref{thm-deligne} instead of Theorem \ref{traces}. Let $Y$ be the smooth projective closure of $X$ and $D:=Y\backslash X$. We then get

\begin{proposition}\label{propcurves}
 Let $\HH$ be a geometrically irreducible lisse $\ell$-adic sheaf on $X$ of rank $r$, pure of weight $0$ and whose determinant is aritmetically of finite order, let $\mathbb Q\subseteq E$ be a finite extension such that $\HH$ is $E$-valued, and $M=M(E,r)$.   
  For every $x\in D(\overline\Fq)$, assume that the breaks of $\HH$ at $x$ are $\leq e_x$, and let $e:=\sum_{x\in D(\overline\Fq)}e_x$ and
$$
 N=2R+\lfloor 2\log^+_q\left(2R^2\left(b_1(X)+e\right)\right)\rfloor,
 $$
where 
$$
R=\sum_{\stackrel{i=0}{i\text{ even}}}^{r-1}{{r+M-i-1}\choose{M}}{{M-1}\choose i}.
$$

 Then $\HH$ has finite (arithmetic and geometric) monodromy if and only if all Frobenius eigenvalues of $\HH$ at $t$ are roots of unity for every $m\leq N$ and every $x\in X(\Fqm)$.
\end{proposition}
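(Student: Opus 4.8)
The plan is to reproduce the argument of Proposition \ref{prop2} line by line, substituting a ramification estimate and Deligne's curve bound (Theorem \ref{thm-deligne}) for the complexity estimate and Theorem \ref{traces}. As before the ``only if'' direction is immediate from Proposition \ref{old}, so I focus on the converse. Assuming the Frobenius eigenvalues of $\HH$ at every $x\in X(\Fqm)$ with $m\leq N$ are roots of unity, they are roots of a degree-$r$ polynomial over $E$ and hence have order dividing $M=M(E,r)$; exactly as in Proposition \ref{prop2} this gives $\Phi_{\HH^{[M]}}(m,x)=r$ for all such $m,x$. Writing $\HH^{[M]}=\sum_{i=0}^{M-1}(-1)^i[R(\wedge^i\rho)\HH]$ and setting $\FF=\bigoplus_{i\text{ even}}R(\wedge^i\rho)\HH$ and $\GGG=\bigoplus_{i\text{ odd}}R(\wedge^i\rho)\HH$, I reduce to showing $\FF\cong\GGG\oplus\Ql^r$. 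Both sheaves are pure of weight $0$, being direct summands of $\HH^{\otimes M}$ up to the unramified constant summand, hence geometrically semisimple, and both have rank $R$ by \cite[Remark 3.6]{rojas-rationality}, so Theorem \ref{thm-deligne} is applicable to them.

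The one genuinely new ingredient is a bound on the breaks of $\FF$ and $\GGG$ at the points of $D$. Here I would use that $\FF$ and $\GGG$ are direct summands of $\HH^{\otimes M}$, together with the standard fact that at each $x\in D(\overline\Fq)$ the local representations of the inertia group $I_x$ whose breaks are all $\leq e_x$ form a subcategory closed under tensor products, subobjects and quotients, namely those trivial on the wild ramification subgroup of slopes $>e_x$ \cite[Chapter 1]{katz1988gauss}. Since $\HH$ has all breaks $\leq e_x$ at $x$ by hypothesis, so does $\HH^{\otimes M}$, and therefore so do its summands $\FF$ and $\GGG$; the constant sheaf $\Ql^r$ is unramified, so $\GGG\oplus\Ql^r$ obeys the same bound. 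Thus the largest breaks satisfy $\alpha_x(\FF),\alpha_x(\GGG\oplus\Ql^r)\leq e_x$ for every $x\in D(\overline\Fq)$.

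It then remains to feed these data into Theorem \ref{thm-deligne}, which yields $\FF\cong\GGG\oplus\Ql^r$ as soon as $\Phi_{\FF,m}=\Phi_{\GGG\oplus\Ql^r,m}$ for all $m\leq N_{\mathrm{Del}}:=2R+\lfloor 2\log_q^+(2R^2(b_1(X)+\max_{x\in D}\sup\{\alpha_x(\FF),\alpha_x(\GGG\oplus\Ql^r)\}))\rfloor$. The point to verify is $N_{\mathrm{Del}}\leq N$: indeed $\max_x\sup\{\alpha_x(\FF),\alpha_x(\GGG\oplus\Ql^r)\}\leq\max_x e_x\leq\sum_x e_x=e$, and the right-hand side of the bound is increasing in the break term, so $N_{\mathrm{Del}}\leq N$. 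As the hypothesis gives $\Phi_{\FF,m}=\Phi_{\GGG\oplus\Ql^r,m}$ for all $m\leq N$, and in particular for all $m\leq N_{\mathrm{Del}}$, Theorem \ref{thm-deligne} gives $\FF\cong\GGG\oplus\Ql^r$, i.e. $\HH^{[M]}=[\Ql^r]$ in the Grothendieck group. Consequently the power sums $\sum_i\alpha_i(x)^M$ of the Frobenius eigenvalues equal $r$ for every $x\in X(\Fqm)$ and every $m\geq 1$; since $\HH$ is pure of weight $0$ each $|\alpha_i(x)|=1$, which forces $\alpha_i(x)^M=1$, so all Frobenius eigenvalues are roots of unity and $\HH$ has finite monodromy by Proposition \ref{old}. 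The main obstacle beyond bookkeeping is the break estimate of the second paragraph, and the only subtlety is the harmless replacement of $\max_x e_x$ by $e$, which merely enlarges the bound and so preserves the implication.
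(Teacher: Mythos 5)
Your proof is correct and is essentially the paper's own argument: the paper proves Proposition \ref{propcurves} by a one-line reduction to Proposition \ref{prop2}, replacing Theorem \ref{traces} with Theorem \ref{thm-deligne} and using that all breaks of $\HH^{\otimes M}$ (hence of its summands $\FF$ and $\GGG$) at each $x\in D(\overline\Fq)$ are $\leq e_x$, which is exactly what you do. The details you supply --- closure of bounded-break inertia representations under tensor products and subquotients, and the harmless replacement of $\max_x e_x$ by $e=\sum_x e_x$, which only enlarges the bound --- are precisely the bookkeeping the paper leaves implicit.
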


\begin{proof}
 The proof goes exactly as in Proposition \ref{prop2}, using Theorem \ref{thm-deligne} and the fact that all breaks of ${\mathcal H}^{\otimes r}$ at $x\in D(\overline\Fq)$ are $\leq e_x$.
\end{proof}

Next, we give similar results based on the integrality of the Frobenius traces instead of its Frobenius eigenvalues, which are generally easier to compute. We start by showing

\begin{lemma}\label{lema2}
 Let $\mathbb Q_p\subseteq E_\pi$ be a finite extension with ramification index $e$, and let $\alpha_1,\ldots,\alpha_r\in E_\pi$. Let $a=\lfloor\log_p r\rfloor$ and
 $$
 N:=r\left(1+\left\lfloor\frac{e}{p-1}\left(1-\frac{1}{p^a}\right)\right\rfloor \right)
 $$
 Then $\alpha_1,\ldots,\alpha_r$ are integral if and only if $\sum_{i=1}^r\alpha_i^k$ is integral for every $1\leq k\leq N$.
\end{lemma}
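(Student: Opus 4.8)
The plan is to prove the nontrivial implication---that integrality of the power sums forces integrality of the $\alpha_i$---by tracking $\pi$-adic valuations through Newton's identities; the reverse implication is immediate, since a power sum of integral elements is integral. Normalize the valuation $v$ of $E_\pi$ so that $v(\pi)=1$. Then $v$ takes values in $\mathbb{Z}$ on $E_\pi^\star$ and $v(p)=e$, so $v(n)=e\cdot v_p(n)$ for $n\in\mathbb{Z}$. Write $p_k=\sum_i\alpha_i^k$ and let $\sigma_1,\dots,\sigma_r$ be the elementary symmetric functions of the $\alpha_i$. Since the valuation ring $\mathcal{O}$ of $E_\pi$ is integrally closed in $E_\pi$ and the $\alpha_i$ are roots of the monic polynomial $\prod_i(X-\alpha_i)=\sum_j(-1)^j\sigma_jX^{r-j}$, the $\alpha_i$ are all integral if and only if every $\sigma_j$ is integral. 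So it suffices to show that if $p_1,\dots,p_N$ are integral then every $\sigma_j$ is integral.

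First I would reduce to a statement about units. Arguing by contraposition, suppose some $\alpha_i$ is non-integral, let $\mu=\min_i v(\alpha_i)<0$ (an integer, crucially), set $S=\{i:v(\alpha_i)=\mu\}$, and write $\alpha_i=\pi^{\mu}u_i$ with $u_i$ a unit for $i\in S$. Every $\alpha_i$ with $i\notin S$ has $v(\alpha_i)\ge\mu+1$, so $v(\alpha_i^k)\ge k\mu+k$; hence, putting $q_k=\sum_{i\in S}u_i^k$, whenever $v(q_k)<k$ the term $\pi^{k\mu}q_k$ strictly dominates and $v(p_k)=k\mu+v(q_k)<k\mu+k=k(\mu+1)\le 0$, so $p_k$ is non-integral. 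It therefore suffices to prove the following sublemma: \emph{for any units $u_1,\dots,u_s$ with $s\le r$ there exists $k\le N$ with $v(q_k)<k$.}

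To prove the sublemma I argue by contradiction, assuming $v(q_k)\ge k$ for all $1\le k\le N$. Let $\tau_1,\dots,\tau_s$ be the elementary symmetric functions of the $u_i$; these are integral, and $\tau_s=\pm\prod_i u_i$ is a \emph{unit}, so $v(\tau_s)=0$. Newton's identities $k\tau_k=\sum_{j=1}^k(-1)^{j-1}\tau_{k-j}q_j$ give, by induction on $k$, the estimate $v(\tau_k)\ge k-e\,v_p(k!)$: indeed $v(k\tau_k)\ge\min_{1\le j\le k}\big(v(\tau_{k-j})+v(q_j)\big)\ge\min_j\big((k-j)-e\,v_p((k-j)!)+j\big)=k-e\,v_p((k-1)!)$, and then $v(\tau_k)\ge v(k\tau_k)-v(k)\ge k-e\,v_p(k!)$ using $v_p((k-1)!)+v_p(k)=v_p(k!)$. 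Feeding in the higher power sums through the linear recurrence $q_k=\sum_{j=1}^s(-1)^{j-1}\tau_jq_{k-j}$ (valid for $k>s$), I would iterate these estimates to improve the lower bound on $v(\tau_s)$ round by round until it becomes strictly positive, contradicting $v(\tau_s)=0$.

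The quantitative heart of this last step is the main obstacle. The naive estimate alone only gives $v(\tau_s)\ge s-e\,v_p(s!)$, which need not be positive when $e$ is large; this reflects the genuine characteristic-$p$ phenomenon that vanishing of the first $s$ power sums does not force the residues $\bar u_i$ to vanish (take $u_1=\dots=u_{p^a}=1$, or the $u_i$ running over the $p^a$-th roots of unity). Overcoming it requires using the power sums of index up to $N$ and bookkeeping carefully how each additional application of the recurrence clears one further factor of $p$ from the denominator. Matching the resulting threshold to the stated $N=r\big(1+\lfloor\frac{e}{p-1}(1-p^{-a})\rfloor\big)$ rests on the identity $\frac{e}{p-1}(1-p^{-a})=e\sum_{j=1}^a p^{-j}$ together with $v_p(s!)=\frac{s-S_p(s)}{p-1}\le\frac{s}{p-1}$ for $s\le r$, where $S_p$ is the base-$p$ digit sum and $a=\lfloor\log_p r\rfloor$ governs the largest $p$-power multiplicity $p^a\le r$; I would then verify sharpness on the scaled root-of-unity configurations above, which realize the extremal case (e.g.\ $u_1,u_2=\pm 1$ already forces the threshold $e\,v_p(2!)+\cdots$ to appear).
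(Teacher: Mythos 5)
Your reverse direction and your reduction to the units sublemma are both correct: by contraposition and the ultrametric inequality it does suffice to show that for units $u_1,\dots,u_s$ ($s\le r$) some power sum $q_k$ with $k\le N$ has $v(q_k)<k$. But notice that this reduction has not actually reduced anything: after rescaling $\beta_i=u_i/\pi$, your sublemma \emph{is} the original lemma in its hardest case (all elements of valuation exactly $-1$), so the entire quantitative content of the statement sits in the step you leave unproven. And what you offer there is a plan, not an argument: ``iterate these estimates to improve the lower bound on $v(\tau_s)$ round by round until it becomes strictly positive.'' This iteration does not work as described. The Newton-identity bound $v(\tau_k)\ge k-e\,v_p(k!)$ is already a fixed point of the estimate you propose to iterate (feeding it back into $v(k\tau_k)\ge\min_j\bigl(v(\tau_{k-j})+v(q_j)\bigr)$ just reproduces it), and the recurrence $q_k=\sum_{j=1}^s(-1)^{j-1}\tau_jq_{k-j}$ only propagates \emph{lower} bounds on valuations; lower bounds alone can never contradict the equality $v(\tau_s)=0$. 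To get a contradiction you need an upper bound (an exact value) somewhere, and your sketch never produces one.

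The missing idea --- and it is precisely what the shape $N=rM$, $M=1+\lfloor\frac{e}{p-1}(1-p^{-a})\rfloor$, is telling you --- is to spend the exponent budget not on the consecutive exponents $1,\dots,N$ but on the arithmetic progression $M,2M,\dots,rM$: apply Newton's identities to the $M$-th powers $\alpha_i^M$ (in your framework, to $u_i^M$). This is what the paper does. With $\nu$ normalized by $\nu(p)=1$, integrality of the power sums $\sum_i\alpha_i^{Mk}$ for $k\le r$ gives $\nu(s_k)\ge-\nu(k!)$ for the elementary symmetric functions $s_k$ of the $\alpha_i^M$; hence the Newton polygon of $\prod_i(1-\alpha_i^MT)$ lies above the polygon through the points $\bigl(k,-\nu(k!)\bigr)$, whose steepest slope in absolute value is $\frac{1}{p-1}(1-p^{-a})$, attained at $k=p^a$. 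Therefore $\nu(\alpha_i)\ge-\frac{1}{M(p-1)}(1-p^{-a})>-\frac{1}{e}$, and since the value group is $\frac{1}{e}\ZZ$ this forces $\nu(\alpha_i)\ge0$; note the crucial final use of discreteness of the valuation, which is also absent from your sketch. In your own units framework the same trick finishes in one line: the elementary symmetric functions of the $u_i^M$ satisfy $v(\tau_s^{(M)})\ge Ms-e\,v_p(s!)$ by your inductive estimate applied to the power sums $q_{Mk}$, while $\tau_s^{(M)}=\pm\prod_iu_i^M$ is a unit, so $0\ge Ms-e\,v_p(s!)$; this is a contradiction because $M>\frac{e}{p-1}(1-p^{-a})\ge e\,v_p(s!)/s$ for every $s\le r$. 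Without this $M$-th-power device (or the equivalent Newton-polygon-plus-discreteness argument), your proof has a genuine hole exactly where the theorem's content lies.
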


\begin{proof}
 Let $M=1+\left\lfloor\frac{e}{p-1}\left(1-\frac{1}{p^a}\right)\right\rfloor$; $p_k:=\sum_{i=1}^n\alpha_i^{Mk}$ for $k\geq 1$ and $s_k$ be the $k$-th elementary symmetric function on $\alpha^M_1,\ldots,\alpha^M_r$ for $k=0,\ldots,r$. By Newton's identities, $ks_k=\sum_{i=1}^k(-1)^{i-1}s_{k-i}p_i$, so $k!s_k$ is integral for every $k=0,\ldots,r$.
 
 If $\nu$ denotes the valuation on $E_\pi$, normalized so that $\nu(p)=1$, we get
 $$
 \nu(s_k)\geq -\nu(k!)=-\sum_{i=1}^\infty\left\lfloor\frac{k}{p^i}\right\rfloor.
 $$
 so the Newton polygon of the polynomial $\prod_{i=1}^r(1-\alpha_i^MT)$ is bounded below by the polygon with vertices 
 $$
 \left\{\left(k,-\sum_{i=1}^\infty\left\lfloor\frac{k}{p^i}\right\rfloor\right); k=0,1,\ldots,r\right\}
 $$
 and, in particular, its largest slope (in absolute value) is bounded by
 $$
 \frac{1+p+\cdots+p^{a-1}}{p^a}=\frac{1}{p-1}\left(1-\frac{1}{p^a}\right)
 $$
 so $\nu(\alpha_i^M)\geq -\frac{1}{p-1}\left(1-\frac{1}{p^a}\right)$ for every $i$, and
 $$
 \nu(\alpha_i)\geq -\frac{1}{M(p-1)}\left(1-\frac{1}{p^a}\right)>-\frac{1}{e}.
 $$
 But $\nu(E_\pi)=\frac{1}{e}\mathbb Z$, so we conclude that $\nu(\alpha_i)\geq 0$ for every $i=1,\ldots,n$.   
\end{proof}

\begin{theorem}

 Let $\HH$ be a geometrically irreducible lisse $\ell$-adic sheaf on $X$ of rank $r$, pure of weight $0$ and whose determinant is aritmetically of finite order, let $\mathbb Q\subseteq E$ be finite extension such that $\HH$ is $E$-valued, $f$ be the maximum among the ramification indices of the primes of $E$ above $p$, and $a=\lfloor\log_p n\rfloor$. Let 
   $$
 N:=r\left(1+\left\lfloor\frac{rf}{p-1}\left(1-\frac{1}{p^a}\right)\right\rfloor \right) (2R+ \lfloor 2\log_q^+(2A_n (A_n^{M-1} C^M+r\cdot c_u(X))^2)\rfloor )
 $$
 where
 $$
 R=\sum_{\stackrel{i=0}{i\text{ even}}}^{r-1}{{r+M-i-1}\choose{M}}{{M-1}\choose i}
 $$
 
%  For every $x\in\overline D$, let $e_x\geq 0$ and assume that the breaks of $\HH$ at $x$ are $\leq e_x$. Let $e:=\sum_{x\in\overline D}e_x$, $M:=M(E,n)$ and
%  $$
%  N:=n\left(1+\left\lfloor\frac{nf}{p-1}\left(1-\frac{1}{p^a}\right)\right\rfloor \right)\left(\sum_{\stackrel{i=0}{i\text{ even}}}^{M-1}{{n+M-i-1}\choose{M}}{{M-1}\choose i}\right)^2(2g+e+\max\{1,\#\overline D\}).
%  $$
 
 Then $\HH$ has finite (arithmetic and geometric) monodromy if and only if $\Phi_\HH(m,x)$ is integral at all places of $E$ over $p$ for every $m\leq N$ and every $x\in X(\Fqm)$.
\end{theorem}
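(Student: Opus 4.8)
The plan is to deduce the statement from the eigenvalue criterion of Proposition \ref{prop2}, using Lemma \ref{lema2} to turn integrality of the Frobenius traces into integrality of the Frobenius eigenvalues at the primes over $p$, which are the only places where integrality is not automatic. One implication is immediate: if $\HH$ has finite monodromy then by Proposition \ref{old} every Frobenius eigenvalue of $\HH$ is a root of unity, so every trace $\Phi_\HH(m,x)$ is a sum of roots of unity, hence an algebraic integer, and in particular integral at the places over $p$ for all $m$. For the converse I would write $N_1:=2R+\lfloor 2\log_q^+(2A_n(A_n^{M-1}C^M+r\cdot c_u(X))^2)\rfloor$ for the bound of Proposition \ref{prop2} and $N_2:=r(1+\lfloor\frac{rf}{p-1}(1-p^{-a})\rfloor)$ with $a=\lfloor\log_p r\rfloor$, so that $N=N_1N_2$. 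By Proposition \ref{prop2} it then suffices to prove that for every $m\leq N_1$ and every $x\in X(\Fqm)$ the eigenvalues $\alpha_1,\dots,\alpha_r$ of Frobenius on $\HH_{\bar x}$ are roots of unity; and as purity of weight $0$ forces $|\iota(\alpha_i)|=1$ for every embedding $\iota\colon\overline{\QQ}\hookrightarrow\CC$, Kronecker's theorem reduces this to showing that each $\alpha_i$ is an algebraic integer.

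I would first treat the places not over $p$. At a place over $\ell$ integrality is automatic, since the image of $\pi_1(X,\bar\eta)$ in $\GL(r,\Ql)$ is compact and therefore conjugate into $\GL(r,\mathcal O)$ for $\mathcal O$ the ring of integers of a finite extension of $\QQ_\ell$, so the $\alpha_i$ are $\ell$-adic units. At a place over a prime $\ell'\neq p,\ell$ the same compactness argument applied to an $\ell'$-adic companion of $\HH$—which carries the same characteristic polynomials and exists by the results of Lafforgue and Drinfeld already used for the $E$-valued property—shows that the $\alpha_i$ are $\ell'$-adic units. Hence integrality of the $\alpha_i$ must only be established at the places over $p$.

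Fix a place $v$ of $E$ over $p$, of ramification index $e_v\leq f$ over $\QQ_p$, extend it to $\overline{\QQ}$, and regard the $\alpha_i$ inside $\overline{\QQ_p}$. Viewing $x\in X(\Fqm)$ as a point of $X(\mathbb F_{q^{mk}})$, the geometric Frobenius there is the $k$-th power of the one at level $\Fqm$, so
$$
\sum_{i=1}^r\alpha_i^{k}=\Phi_\HH(mk,x).
$$
For $m\leq N_1$ and $1\leq k\leq N_2$ one has $mk\leq N_1N_2=N$, so by hypothesis all these power sums are integral at $v$. Putting $L:=1+\lfloor\frac{rf}{p-1}(1-p^{-a})\rfloor$, so that $N_2=rL$, I would then run the estimate of Lemma \ref{lema2} with $e=rf$: the elementary symmetric functions of $\alpha_1^{L},\dots,\alpha_r^{L}$ lie in $E_v$, and Newton's identities applied to the power sums $\sum_i\alpha_i^{Lk}$ ($1\leq k\leq r$) bound their valuations below by $-\nu(k!)$; this controls the Newton polygon of $\prod_i(1-\alpha_i^{L}T)$ and gives $\nu(\alpha_i)>-1/(rf)$ for $\nu$ normalized by $\nu(p)=1$. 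Since $\alpha_i$ generates an extension of degree $\leq r$ over $E_v$, the field $E_v(\alpha_i)$ has ramification index $e'\leq re_v\leq rf$ over $\QQ_p$, so $\nu$ takes values in $\frac1{e'}\ZZ$ there; as $\nu(\alpha_i)>-1/(rf)\geq -1/e'$, this forces $\nu(\alpha_i)\geq 0$. Each $\alpha_i$ is thus integral at $v$, hence—with the previous paragraph—an algebraic integer and a root of unity, and Proposition \ref{prop2} yields finite monodromy.

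The delicate point I anticipate is the passage from the power sums $\sum_i\alpha_i^k$, which lie in $E$, to the individual eigenvalues at the primes over $p$. The splitting field of the characteristic polynomial of Frobenius may be far more ramified than $rf$, so Lemma \ref{lema2} cannot be applied as a black box to all the $\alpha_i$ at once; instead its Newton-polygon bound should be read off from the polynomial $\prod_i(1-\alpha_i^{L}T)\in E_v[T]$, and the concluding value-group step performed one eigenvalue at a time inside $E_v(\alpha_i)$, with $e=rf$ serving merely as a uniform upper bound on the ramification of those fields. The other input requiring care is the integrality of the eigenvalues away from $p$, for which the existence of companions is the natural and essential tool.
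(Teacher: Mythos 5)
Your proof is correct and follows essentially the same route as the paper's: reduce to Proposition \ref{prop2} by applying (the argument of) Lemma \ref{lema2} with $e=rf$ to the power sums $\sum_i\alpha_i^k=\Phi_\HH(mk,x)$, which the hypothesis makes integral at places over $p$ for $mk\leq N$, and use Lafforgue's integrality theorem away from $p$ together with purity to conclude the eigenvalues are roots of unity. Your treatment of the ``delicate point'' is in fact slightly more careful than the paper's own proof, which asserts that the eigenvalues all lie in a single extension of $E$ of degree $\leq r$ (true only eigenvalue-by-eigenvalue, since the splitting field may be larger); your fix---reading the Newton-polygon bound off $\prod_i\left(1-\alpha_i^L T\right)\in E_v[T]$ and performing the value-group step separately inside each $E_v(\alpha_i)$---is exactly the right way to make that step precise.
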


\begin{proof}
 Assume that all Frobenius traces of $\HH$ at $x$ are integral at all places of $E$ over $p$ for every $m\leq N$ and every $x\in X(\Fqm)$. For every $m\geq 1$ and every $x\in X(\Fqm)$, the Frobenius eigenvalues of $\HH$ at $t$ are contained in some extension of $E$ of degree $\leq r$, in which all primes above $p$ have ramification index $\leq rf$. By lemma \ref{lema2}, all Frobenius eigenvalues of $\HH$ at $t$ are integral for every $m\leq N'$ and every $x\in X(\Fqm)$ at all places over $p$, where
 $$
 N'=2R+ \lfloor 2\log_q(2A_n (A_n^{M-1} C^M+r\cdot c_u(X))^2)\rfloor.
 $$
 
 By \cite[Th\'eor\`eme VII.6]{lafforgue}, they are also integral at all other non-archimedean places, so they are algebraic integers. Since their product is a root of unity, they must all be roots of unity, and we conclude by Proposition \ref{prop2}.
\end{proof}

For $X$ a smooth curve we get the following, more optimized, result by using \ref{propcurves} instead of \ref{prop2}:

\begin{theorem}

 Let $\HH$ be a geometrically irreducible lisse $\ell$-adic sheaf on $X$ of rank $r$, pure of weight $0$ and whose determinant is aritmetically of finite order, let $\mathbb Q\subseteq E$ be finite extension such that $\HH$ is $E$-valued, $f$ be the maximum among the ramification indices of the primes of $E$ above $p$, and $a=\lfloor\log_p r\rfloor$.  For every $x\in D(\overline\Fq)$, assume that the breaks of $\HH$ at $x$ are $\leq e_x$, and let $e:=\sum_{x\in D(\overline\Fq)}e_x$ and
$$
 N=r\left(1+\left\lfloor\frac{rf}{p-1}\left(1-\frac{1}{p^a}\right)\right\rfloor \right) (2R+\lfloor 2\log^+_q\left(2R^2\left(b_1(X)+e\right)\right)\rfloor),
 $$
where 
$$
R=\sum_{\stackrel{i=0}{i\text{ even}}}^{r-1}{{r+M-i-1}\choose{M}}{{M-1}\choose i}.
$$
 
%  For every $x\in\overline D$, let $e_x\geq 0$ and assume that the breaks of $\HH$ at $x$ are $\leq e_x$. Let $e:=\sum_{x\in\overline D}e_x$, $M:=M(E,n)$ and
%  $$
%  N:=n\left(1+\left\lfloor\frac{nf}{p-1}\left(1-\frac{1}{p^a}\right)\right\rfloor \right)\left(\sum_{\stackrel{i=0}{i\text{ even}}}^{M-1}{{n+M-i-1}\choose{M}}{{M-1}\choose i}\right)^2(2g+e+\max\{1,\#\overline D\}).
%  $$
 
 Then $\HH$ has finite (arithmetic and geometric) monodromy if and only if $\Phi_\HH(m,x)$ is integral at all places of $E$ over $p$ for every $m\leq N$ and every $x\in X(\Fqm)$.
\end{theorem}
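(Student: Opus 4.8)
The plan is to reproduce the proof of the preceding (higher-dimensional) theorem essentially verbatim, with the single substitution of the curve-specific eigenvalue bound of Proposition \ref{propcurves} for the complexity-based bound of Proposition \ref{prop2}. Only the ``if'' direction carries content: the converse is immediate, since if $\HH$ has finite monodromy then by Proposition \ref{old} every Frobenius eigenvalue is a root of unity, so every trace $\Phi_\HH(m,x)$ is a sum of roots of unity and hence an algebraic integer, integral at all places of $E$ over $p$. For the forward direction, write for brevity $N_{\mathrm{lem}}=r\bigl(1+\lfloor\frac{rf}{p-1}(1-p^{-a})\rfloor\bigr)$ and $N'=2R+\lfloor 2\log^+_q(2R^2(b_1(X)+e))\rfloor$, so that $N=N_{\mathrm{lem}}\cdot N'$ and $N'$ is precisely the bound appearing in Proposition \ref{propcurves}.

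First I would fix $m\geq 1$ and $x\in X(\Fqm)$, and let $\alpha_1,\dots,\alpha_r$ be the Frobenius eigenvalues of $\HH$ at $x$. These are roots of the degree-$r$ characteristic polynomial of $F_{k_m}$, whose coefficients lie in $E$ since $\HH$ is $E$-valued, so they generate an extension of $E$ of degree $\leq r$, in which every prime over $p$ has ramification index $\leq rf$. The key bookkeeping observation is that the power sum $\sum_i\alpha_i^k$ equals $\Phi_\HH(km,x)$, because viewing $x$ as a $k_{km}$-point replaces $F_{k_m}$ by $F_{k_m}^k=F_{k_{km}}$ and hence $\alpha_i$ by $\alpha_i^k$. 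The product structure $N=N_{\mathrm{lem}}\cdot N'$ is designed exactly so that, as $k$ ranges over $1\leq k\leq N_{\mathrm{lem}}$ and $m$ over $1\leq m\leq N'$, the products $km$ remain $\leq N$.

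Next I would apply Lemma \ref{lema2} with ramification index $rf$ to the tuple $\alpha_1,\dots,\alpha_r$: its hypothesis that $\sum_i\alpha_i^k$ be integral for $1\leq k\leq N_{\mathrm{lem}}$ is, by the preceding identity, exactly the integrality of $\Phi_\HH(km,x)$ at the places over $p$ for those $k$, which is guaranteed by assumption as long as $km\leq N$. The lemma then gives that all $\alpha_i$ are integral at every place over $p$, for every $x\in X(\Fqm)$ with $m\leq N'$; this is the step where the ramification data $e$ and the Betti number $b_1(X)$, rather than the complexity, enter, through the choice of $N'$. Invoking \cite[Th\'eor\`eme VII.6]{lafforgue} upgrades integrality at the places over $p$ to integrality at all non-archimedean places, so each $\alpha_i$ is an algebraic integer; combined with purity of weight $0$ (so $|\iota(\alpha_i)|=1$ for every embedding $\iota$) and the determinant being arithmetically of finite order, every $\alpha_i$ is a root of unity for all $x\in X(\Fqm)$ with $m\leq N'$. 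Proposition \ref{propcurves} then yields finite monodromy.

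The main obstacle here is arithmetic rather than conceptual: one must check that the nested bounds interlock correctly, i.e. that the single integrality hypothesis for all $m\leq N$ really supplies, at each eigenvalue-level $m\leq N'$, the full block of power sums $k=1,\dots,N_{\mathrm{lem}}$ needed to run Lemma \ref{lema2}, and that the ramification index $rf$ used in the lemma is the correct one for the field generated by the eigenvalues. All the genuine analytic input—the Newton-polygon valuation estimate of Lemma \ref{lema2} and the trace-comparison behind Proposition \ref{propcurves} (itself resting on Theorem \ref{thm-deligne})—is already established, so the proof amounts to assembling these pieces and verifying the numerology of $N$.
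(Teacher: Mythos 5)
Your proposal is correct and follows essentially the same route as the paper: the paper's proof of this theorem is precisely the proof of the preceding higher-dimensional theorem (eigenvalues lie in a degree $\leq r$ extension of $E$ with ramification index $\leq rf$ over $p$; power sums of eigenvalues are traces over extension fields, so the product structure of $N$ lets Lemma \ref{lema2} convert trace integrality for $m\leq N$ into eigenvalue integrality for $m\leq N'$; Lafforgue plus purity and the finite-order determinant give roots of unity), with Proposition \ref{propcurves} substituted for Proposition \ref{prop2} at the final step. Your write-up fills in exactly the details the paper leaves implicit, and the bookkeeping ($km\leq N_{\mathrm{lem}}\cdot N'=N$, ramification index $rf$) checks out.
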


\section{Examples}
\label{examples}

Let $p$ be a prime, $X=\mathbb A^1_{\Fp}$ the affine line, fix an additive character $\psi:\Fp\to\CC$ and an integer $n\geq 2$, and consider the sheaf $\FF\in{\mathcal S}(X,\Ql)$ whose trace function at $t\in X(\Fpr)$ is given by
$$
-\frac{1}{G^r}\sum_{x\in\Fpr}\psi(\mathrm{Tr}_{\Fpr/\Fp}(x^n+tx))
$$
where $G=-\sum_{x\in\Fp}\psi(x^2)$ is the Gauss sum. That is, the (normalized) Fourier transform of the pull-back of the Artin-Schreier sheaf $\mathcal L_\psi$ by the $n$-th power map. The sheaf $\FF$ is lisse of rank $n-1$, pure of weight $0$ and $\QQ(\zeta_p)$-valued, where $\zeta_p$ is a $p$-th root of unity. We have
$$
M=M(\QQ(\zeta_p),n-1)=p^{\lfloor 1+\log_p(n-1)\rfloor}\prod_{\stackrel{\lambda\leq n \mathrm{ prime}}{\lambda\neq p}}\lambda^{\lfloor 1+\log_\lambda\frac{n-1}{\lambda-1}\rfloor},
$$
$b_1(X)=0$, $e=\frac{1}{n-1}$ and $f=p-1$ (as $p$ is totally ramified in $\QQ(\zeta_p)$), so in this case we get finite monodromy iff all Frobenius eigenvalues are roots of unity at all points defined over extensions of $\Fp$ of degree up to
$$
2R+\left[2\log^+_p\left(2R^2/(n-1)\right)\right],
$$
or if all Frobenius traces are integral at all points over extensions of $\Fp$ of degree up to
$$
(n-1)\left(1+\left\lfloor(n-1)\left(1-\frac{1}{p^{\lfloor \log_p(n-1)\rfloor}}\right)\right\rfloor \right)(2R+\left[2\log^+_p\left(2R^2/(n-1)\right)\right]).
$$

Even for $p=2$, this gives degrees up to 40, 319 and 2304402 respectively for $n=3,4,5$ for the eigenvalues criterion, far beyond what we can compute in practice.

Let now $X={\Gm}_{,\Fq}$ be the one-dimensional torus over $\Fq$ where $q=p^r$, fix an additive character $\psi:\Fp\to\CC$  and two disjoint sets of multiplicative characters $\mathbf{\chi}=\{\chi_1,\ldots,\chi_a\}$ and $\mathbf{\rho}=\{\rho_1,\ldots,\rho_b\}$, and consider the (normalized) hypergeometric sheaf  
$$\HH:=\left(\frac{1}{G^{r(a+b-1)}}\right)^{deg}\HH_1(!;\psi\circ\mathrm{Tr}_{\Fq/\Fp};\chi;\rho)$$
\cite[Chapter 8]{katz1990esa}. Assume $a>b$, then $\HH$ is lisse of rank $a$ on $X$, pure of weight $0$ \cite[Theorem 8.4.2]{katz1990esa}, and $\QQ(\zeta_{pm})$-valued, where $m=\mathrm{lcm}(\mathrm{ord}(\chi_i),\mathrm{ord}(\rho_j))$. Here 
$$
M= M(\QQ(\zeta_{mp}),a)=p^{\lfloor 1+\log_p a\rfloor}\prod_{\lambda|m \mathrm{ prime}}\lambda^{\lfloor \mathrm{ord}_\lambda a+\log_\lambda a\rfloor}
\prod_{\stackrel{\lambda\leq a+1 \mathrm{ prime}}{\lambda\nmid mp}}\lambda^{\lfloor 1+\log_\lambda\frac{a}{\lambda-1}\rfloor},
$$
$b_1(X)=1$, $e=\frac{1}{a-b}$ and $f=p-1$ (as $p$ is totally ramified in $\QQ(\zeta_p)$ and unramified in $\QQ(\zeta_m)$), so in this case we get finite monodromy iff all Frobenius eigenvalues are roots of unity at all points defined over extensions of $\Fq$ of degree up to
$$
2R+\left\lfloor2\log^+_q\left(2R^2\left(1+\frac{1}{a-b}\right)\right)\right\rfloor\leq 2R+\lfloor 2\log^+_q\left(4R^2\right)\rfloor,
$$
or if all Frobenius traces are integral at all points over extensions of $\Fp$ of degree up to
$$
a\left(1+\left\lfloor a\left(1-\frac{1}{p^{\lfloor \log_p a\rfloor}}\right)\right\rfloor \right)(2R+\lfloor 2\log^+_q\left(2R^2(1+1/(a-b))\right)\rfloor)\leq
$$
$$
\leq a\left(1+\left\lfloor a\left(1-\frac{1}{p^{\lfloor \log_p a\rfloor}}\right)\right\rfloor \right)(2R+\lfloor 2\log^+_q(4R^2)\rfloor)
$$

For $p=2$ and $a=2$, this gives degrees up to 54 and 124 respectively for $m=3,5$ for the eigenvalues criterion, which is again beyond what we can compute in practice.

Proposition \ref{prop2} can be improved in particular cases if we can get better estimates for the dimensions of the cohomology groups of the tensor powers of $\FF$, such as the ones given in \cite{katz-betti}.

The main obstacle that makes the bounds for $N$ so large in these theorems is the big rank of the components of $\FF^{[M]}$. It is easy to see that the $N$ in Theorem \ref{traces} can not be made smaller than $O(r)$ - even in dimension $0$, one could take for instance the push-forward of the trivial sheaf from $\mathrm{Spec}\,\Fqr$ to $\mathrm{Spec}\,\Fq$, which has rank $r$ and trace $0$ over $\mathrm F_{q^i}$ for $i<r$. However, in the proof of Proposition \ref{prop2} we have equality not only of the Frobenius traces of $\FF$ and $\GGG\oplus\Ql^r$, but also of their Frobenius characteristic polynomials. Any improvement of Theorem \ref{traces} in the case where we have equality of Frobenius characteristic polynomials, and not just of traces, would lead to similar improvements in the size of the $N$'s in the theorems, which could hopefully make them usable in practice.

\section{Acknowledgements}

The author would like to thank N. Katz and P.H. Tiep for their useful comments on earlier versions of the manuscript, and H. Esnault for her remarks about Deligne's Theorem \ref{thm-deligne}.

The author was partially supported by PID2020-114613GB-I00 (Ministerio de Ciencia e Innovaci\'on), P20-01056 and US-1262169 (Consejer\'{\i}a de Econom\'{\i}a, Conocimiento, Empresas y Universidad, Junta de Andaluc\'{\i}a and FEDER)

% 
% \begin{thebibliography}{}
% \bibitem{rw} Rojas-León, A.; Wan, D. \newblock \emph{Improvement of the Weil bound for Artin-Schreier curves}, \newblock Math Ann. 351 (2011), 417--422. MR283666
% \end{thebibliography}

\bibliographystyle{plain}
\bibliography{bibliography}

\end{document}